\documentclass[11pt,reqno]{amsart}
\usepackage{amsmath}
\usepackage{cases}
\usepackage{mathrsfs}
\usepackage{bbm}
\usepackage{amssymb}
\usepackage{amscd}
\usepackage{amsfonts,latexsym,amsmath,
amsthm,amsxtra,mathdots,amssymb,latexsym,mathabx}
\usepackage[all,cmtip]{xy}
\RequirePackage{amsmath} \RequirePackage{amssymb}
\usepackage{color}
\usepackage{colordvi}
\usepackage{multicol}
\usepackage{hyperref}
\usepackage{mathtools}
\usepackage[margin=1in]{geometry}
\usepackage{xcolor}

\hypersetup{
    colorlinks,
    linkcolor={red!50!black},
    citecolor={blue!100!black},
    urlcolor={blue!100!black}
}
\usepackage{cite}

\marginparwidth    0pt
\oddsidemargin     0pt
\evensidemargin    0pt
\topmargin         0pt
\textheight        21cm
\textwidth         16cm

\newcommand{\bea}{\begin{eqnarray}}
\newcommand{\eea}{\end{eqnarray}}
\newcommand{\bna}{\begin{eqnarray*}}
\newcommand{\ena}{\end{eqnarray*}}

\numberwithin{equation}{section}
\DeclareMathOperator\arccosh{arccosh}

\setcounter{footnote}{0}

\theoremstyle{plain}
\newtheorem{lemma}{Lemma}[section]
\newtheorem{theorem}[lemma]{Theorem}

\theoremstyle{definition}

\newtheorem{remark}{Remark}

\renewcommand{\Re}{\operatorname{Re}}
\renewcommand{\Im}{\operatorname{Im}}

\newcommand{\GL}{\operatorname{GL}}

\title[]
{On an unconditional spectral analog of Selberg's result on $S(t)$}

\author{Qingfeng Sun}
	\address{School of Mathematics and Statistics, Shandong University, Weihai\\Weihai, Shandong 264209, China}
	\email{qfsun@sdu.edu.cn}

    \author{Hui Wang}
  \address{School of Mathematics, Shandong University, Jinan 250100, China}
    \email{wh0315@mail.sdu.edu.cn}

\subjclass[2010]{11F12, 11F66, 11F72.}

\keywords{Moments, Hecke--Maass cusp forms, Selberg's limit theorem.}

\thanks{Q. Sun was partially supported by the National Natural Science Foundation of China (Grant Nos.
11871306 and 12031008) and
the Natural Science Foundation of Shandong Province (Grant No. ZR2023MA003).}

\date{}

\begin{document}
\begin{abstract}
Let $S_j(t)=\frac{1}{\pi}\arg L(1/2+it, u_j)$, where $u_j$ is an even Hecke--Maass cusp form
for $\rm SL_2(\mathbb{Z})$ with Laplacian eigenvalue $\lambda_j=\frac{1}{4}+t_j^2$.
Without assuming the GRH, we establish an asymptotic formula for the moments of $S_j(t)$.
\end{abstract}
\maketitle

\section{Introduction}
\setcounter{equation}{0}
For $T>0$, the approximate formula for $N(T)$,
the number of zeros $\rho=\beta+i\gamma$ of the Riemann zeta-function $\zeta(s)$
in the rectangle $0<\beta<1$, $0<\gamma\leq T$,
was given by \cite[Chapter 15]{Davenport} of the form
\bna
N(T)=\frac{T}{2\pi}\log \frac{T}{2\pi}-\frac{T}{2\pi}+\frac{7}{8}+S(T)+O(T^{-1}),
\ena
where \bea\label{S(T)}
S(T)=\frac{1}{\pi}\arg \zeta\big(\frac{1}{2}+iT\big).
\eea
The argument is defined by continuous variation along the horizontal
segment from $\infty+iT$ to $1/2+iT$, starting at $\infty+iT$ with the value $0$.
The complicated behavior of $S(t)$ is an interesting topic studied by many number theorists
(see, for example, \cite{Backlund,Cramer,Littlewood,Titchmarsh1}).

In 1940s, Selberg \cite{Selberg, Selberg1} showed that for any $n\in \mathbb{N}$,
\bea\label{S(t) even moment}
\int_{T}^{2T}S(t)^{2n}\mathrm{d}t=\frac{(2n)!}{n!(2\pi)^{2n}}T(\log\log T)^n+O\big(T(\log\log T)^{n-1/2}\big),
\eea
which can imply, on average sense, $|S(t)|$ has order of magnitude $\sqrt{\log\log T}$.
In addition, for Dirichlet $L$-functions $L(s,\chi)$ with primitive $\chi$ and
$$S(t,\chi)=\frac{1}{\pi}\arg L\big(\frac{1}{2}+it, \chi\big),$$
Selberg \cite{Selberg2} investigated the behavior of $S(t,\chi)$ and obtained, for prime modulus $q$ and fixed $t>0$,
\bea\label{S(chi) even moment}
\sideset{}{^*}\sum_{\chi \bmod q} S(t, \chi)^{2n}=\frac{(2n)!}{n!(2\pi)^{2n}}q(\log\log q)^n+O\big(q(\log\log q)^{n-1/2}\big),
\eea
where $*$ denotes the summation goes through the primitive characters $\chi (\bmod\,q)$.
Note that the counterparts of \eqref{S(t) even moment} and \eqref{S(chi) even moment} for the odd-order moments
can also be obtained by making a minor adaptation in Selberg's techniques.

For higher rank $L$-functions, there are relevant investigations.
In 1997, Hejhal--Luo \cite{HL} considered the generalization of \eqref{S(chi) even moment}
to the framework of $\Gamma$-automorphic
eigenfunctions $\psi$ of the hyperbolic Laplacian $\Delta$ for the full modular group $\Gamma=\rm SL_2(\mathbb{Z})$.
Let $L(s,\psi)$ be the usual Hecke--Maass $L$-function associated to $\psi$, and define
\bna
S(t,\psi)=\frac{1}{\pi}\arg L\big(\frac{1}{2}+it, \psi\big)
\ena
in the same fashion as in \eqref{S(T)}.
They established the analogs of \eqref{S(t) even moment} when $\psi$ is Eisenstain series.
And, let $\{u_j\}_{j=1}^\infty$ be an orthonormal basis of the space of
Maass cusp forms for $\rm SL_2(\mathbb{Z})$ such that
$\Delta u_j=\lambda_ju_j$ with Laplacian eigenvalue $\lambda_j=\frac{1}{4}+t_j^2$ ($t_j\geq0$),
and each $u_j$ is an eigenfunction of all the Hecke operators.
In this case $\{u_j\}_{j=1}^\infty$ consists of even Maass forms and odd Maass forms according to $u_j(-\overline{z})=u_j(z)$
and $u_j(-\overline{z})=-u_j(z)$.
Each $u_j(z)$ has the Fourier expansion
\bna
u_j(z)=\sqrt{y}\sum_{n\neq 0}^\infty \rho_j(n)K_{it_j}(2\pi |n|y)e(nx), \quad z=x+iy\in \mathbb{H},
\ena
where $K_\alpha(x)$ is the $K$-Bessel function of order $\alpha$ and
\bna
\rho_j(\pm n)=\rho_j(\pm1)\lambda_j(n), \quad n\geq 1.
\ena
Here $\rho_j(1)\neq 0$, and $\lambda_j(n)$ is the eigenvalue of the $n$-th Hecke operator $T_n$.
If $\psi$ is the Maass cusp form $u_j$, under Generalized Riemann Hypothesis (GRH),
they showed that for fixed $t, \beta>0$ and any large parameter $T>0$,
\begin{equation}\label{HL result}
\begin{split}
\sum_{j=1}^\infty e^{-\beta(t_j-T)^2}|\nu_j(1)|^2(S(t,u_j))^{2n}=
\frac{2\pi^{-3/2}}{\sqrt{\beta}}\frac{(2n)!}{n!(2\pi)^{2n}}T(\log\log T)^{n}
+O_{t,n,\beta}\big(T(\log\log T)^{n-1/2}\big),
\end{split}
\end{equation}
where $\nu_j(1)$ is defined by \eqref{nu_j(n)} below.
These spectral results have been recently extended, with a $\rm\GL_3$ analog obtained by Liu--Liu \cite{LL} under GRH,
which was generalized by Chen--Lau--Wang \cite[Corollary 1.2]{CLW} to $\rm\GL_n$, $n\geq 3$ under the same hypothesis.
Moreover, for the holomorphic case, by means of the weighted version of the zero-density estimate,
Liu--Shim \cite{LS} and Sun--Wang \cite{SW} successively obtained the unconditional analogues in the weight aspect and in the level aspect, respectively.

In this paper, we are devoted to get the asymptotic formula \eqref{HL result} after removing the GRH.
Assume that $u_j$ is an even Hecke--Maass cusp form as above, the $L$-series associated to it is defined by
\begin{equation*}\label{L-functions}
\begin{split}
L(s,u_j)=\sum_{n=1}^\infty\frac{\lambda_j(n)}{n^s}&=\prod_p(1-\lambda_j(p)p^{-s}+p^{-2s})^{-1}\\
&=\prod_p(1-\alpha_j(p)p^{-s})^{-1}(1-\beta_j(p)p^{-s})^{-1},\quad \text{for}\,\,\Re(s)>1,
\end{split}
\end{equation*}
which admits an analytic continuation to the whole complex plane and satisfies the functional equation
\bna\label{FE1}
\Lambda(s,u_j)=\pi^{-s}\,
\Gamma\left(\frac{s+it_j}{2}\right)\Gamma\left(\frac{s-it_j}{2}\right)L(s,u_j)=\Lambda(1-s,u_j).
\ena
Here we have $\alpha_j(p)\beta_j(p)=1$ and, for convenience, $|\alpha_j(p)|\geq 1$.
It is not known that the $L$-functions of Maass cusp forms satisfy the Ramanujan--Petersson conjecture, the
current best known estimate is due to Kim--Sarnak \cite{KS},
\bea\label{lambda_j(n) bound}
|\lambda_j(n)|\leq n^\vartheta\tau(n), \quad n\in\mathbb{N},
\eea
with $\vartheta=7/64$ and $\tau(n)$ being the divisor function. The Ramanujan--Petersson conjecture predicts $\vartheta=0$.
Note that the Fourier coefficients $\lambda_j(n)$ are real and satisfy
the following Hecke relation,
\bea\label{the Hecke relation 1}
\lambda_j(m)\lambda_j(n)
=\sum\limits_{d|(m,n)}\lambda_j\left(\frac{mn}{d^2}\right),
\quad \text{for}\,\, m,n\geq 1.
\eea

Now we redefine $S_j(t)$ by
\bna
S_j(t):=\frac{1}{\pi}\arg L(1/2+it, u_j),
\ena
where the argument $\arg L(1/2+it, u_j)$ is obtained by continuous variation along the line $\Im(s)=t$ from
$\sigma=+\infty$ to $\sigma=1/2$.
Our main result is the following theorem.
\begin{theorem}\label{main-theorem}
Let $t>0$ be fixed and $T$ be any large parameter.
For $M$ a parameter with the constraint $T^\varepsilon\leq M\leq T^{1-\varepsilon}$, we have
\begin{equation*}
\begin{split}
\sideset{}{^\prime}\sum_{j=1}^\infty e^{-\frac{(t_j-T)^2}{M^2}}|\nu_j(1)|^2 (S_j(t))^n=
\frac{MT}{4\pi^{3/2}}C_n(\log\log T)^{n/2}
+O_{t,n}\big(MT(\log\log T)^{(n-1)/2}\big),
\end{split}
\end{equation*}
where $\nu_j(n)$ is defined by \eqref{nu_j(n)} below and
\bna
C_n=\begin{cases}
\frac{n!}{(n/2)!(2\pi)^{n}}, \,&\textit{if }\, n\,\, \text{is even},\\
0, \,& \textit{if }\, n \,\,\text{is odd}.
\end{cases}
\ena

Besides, we define the following probability measure
$\mu_{TM}$ on $\mathbb{R}$ by
\bna
\mu_{TM}(E):=\left(\sideset{}{^\prime}\sum_{|t_j-T|\leq M}|\nu_j(1)|^2
\textbf{1}_E\left(\frac{S_j(t)}{\sqrt{\log\log T}}\right)\right)
\bigg/\left(\sideset{}{^\prime}\sum_{|t_j-T|\leq M}|\nu_j(1)|^2\right),
\ena
where $\textbf{1}_E$ is the characteristic function on a Borel measurable set $E$ in $\mathbb{R}$.
Then for every $a<b$, we have
\bna
\lim_{T\rightarrow \infty} \mu_{TM}([a,b])=\sqrt{\pi}\int_a^b \exp(-\pi^2\xi^2)\mathrm{d}\xi,
\ena
uniformly with respect to $M$.
\end{theorem}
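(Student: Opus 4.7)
\medskip

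\noindent\textbf{Proof proposal.} My plan is to follow Selberg's classical approach adapted to the spectral $t_j$-aspect, with the GRH input replaced by a weighted zero-density estimate for the family $\{L(s,u_j)\}_j$, in the spirit of Liu--Shim \cite{LS} and Sun--Wang \cite{SW}. The starting point is an approximate explicit formula for $S_j(t)$: by contour-integrating $L'/L(s,u_j)$ against a suitable smooth kernel supported on a dyadic window, one writes, for a truncation parameter $X=T^{\eta}$ with small $\eta>0$,
\bna
\pi S_j(t)=-\sum_{p\leq X}\frac{\lambda_j(p)\sin(t\log p)}{\sqrt{p}}\,\varphi(p/X)+R_j(t,X),
\ena
up to a lower-order prime-square contribution and error from the tails. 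The remainder $R_j(t,X)$ is controlled by a localized count of non-trivial zeros of $L(s,u_j)$ within distance $\asymp 1/\log X$ of $1/2+it$.

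The unconditional step is to show that the spectral measure (weighted by $|\nu_j(1)|^2 e^{-(t_j-T)^2/M^2}$) of those $u_j$ for which $R_j(t,X)$ fails to be negligibly small is itself of lower order than $MT$. For this I would prove a weighted second-moment / zero-density bound in the $t_j$-aspect: by pointwise expressing the zero count via a mollified Dirichlet polynomial and applying the Kuznetsov trace formula together with large-sieve-type manipulations, one obtains that the $j$-sum over ``bad'' forms contributes $O(MT(\log\log T)^{-A})$ for arbitrary $A$, using $\vartheta=7/64$ from \eqref{lambda_j(n) bound} only to handle Hecke eigenvalues at ramified levels of the argument.

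With $R_j$ thus negligible, the $n$-th moment reduces to the spectral average of the $n$-th power of the prime Dirichlet polynomial. I would expand the $n$-fold product, linearize $\lambda_j(p_1)\cdots \lambda_j(p_n)$ using the Hecke relation \eqref{the Hecke relation 1}, and then invoke Kuznetsov to evaluate $\sum_j e^{-(t_j-T)^2/M^2}|\nu_j(1)|^2 \lambda_j(m)$ for each resulting index $m$. The delta-term $\delta_{m=1}$ in Kuznetsov contributes the main term of size $\asymp MT$, while the Kloosterman term is bounded by standard Bessel transform estimates together with Weil's bound. The combinatorial count of tuples $(p_1,\dots,p_n)$ that produce the diagonal $m=1$ is given by perfect pairings: using $\sin(t\log p_i)\sin(t\log p_k)$ averaged and $\sum_{p\leq X}1/p=\log\log X+O(1)$, one obtains $(2k)!/(k!\,2^k)$ pairings for $n=2k$, which together with the factor $1/(2\pi)^n$ from $(1/\pi)^n$ and the two sine factors assembles the constant $C_n$ of the theorem; for odd $n$ no such pairing exists and the main term vanishes.

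The main obstacle I anticipate is the weighted zero-density estimate, because we must show that the \emph{un-GRH} contribution is small after spectral averaging with the cusp-form weights $|\nu_j(1)|^2$, uniformly in $M\in [T^{\eps},T^{1-\eps}]$; this is where the paper's genuine input must lie, and where the Kim--Sarnak bound \eqref{lambda_j(n) bound} rather than Ramanujan forces mild losses that need to be absorbed into the error $O(MT(\log\log T)^{(n-1)/2})$. Granted the moment asymptotic, the limit law is immediate: since the Gaussian measure $\sqrt{\pi}\,e^{-\pi^2\xi^2}\,\dd\xi$ is determined by its moments, and the probability measures $\mu_{TM}$ (after partial summation to remove the Gaussian spectral weight, via a standard Abel/comparison argument so that $e^{-(t_j-T)^2/M^2}$ may be replaced by the sharp cut-off $|t_j-T|\leq M$) have moments converging to those of the Gaussian, the Fréchet--Shohat theorem yields convergence in distribution on every $[a,b]$, uniformly in $M$.
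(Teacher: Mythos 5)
Your plan is essentially the paper's own argument: Selberg's explicit-formula approximation of $S_j(t)$ by a prime Dirichlet polynomial, the remainder controlled through a weighted zero-density estimate in the spectral aspect, the moments of the main polynomial evaluated via the Kuznetsov formula with the diagonal pairings producing $C_n$, and the limit law deduced by the method of moments. The only notable difference is that you propose to establish the weighted zero-density estimate yourself, whereas the paper imports it from Liu--Streipel \cite{LS24}; otherwise the decomposition, the key lemmas, and the combinatorics coincide.
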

\begin{remark}
For $u_j$ an even Hecke--Maass cusp form of $\rm SL_2(\mathbb{Z})$
with Laplacian eigenvalue $\lambda_j=\frac{1}{4}+t_j^2$,
our result indicates that $|S_j(t)|$ has order of magnitude $\sqrt{\log\log T}$ on average
for $t_j\asymp T$. In addition, as $T\rightarrow \infty$, the ratios $S_j(t)/\sqrt{\log\log T}$ converges in distribution
to a normal distribution of mean $0$
and variance $(2\pi^2)^{-1}$.
\end{remark}
\section{Preliminaries}
\label{prelim} \subsection{Notation} Throughout the paper,
$\varepsilon$ is an arbitrarily small positive
real number, which may be different at each occurrence.
As usual, $e(z) = \exp (2 \pi i z) = e^{2 \pi i z}$ and $\delta_{m,n}$ is the Kronecker symbol that detects $m=n$.
We write $f = O(g)$ or $f \ll g$ to mean $|f| \leq cg$ for some unspecified positive constant $c$.
The symbol $y\asymp Y$ is to mean that $c_1Y\leq |y| \leq c_2Y$ for some positive constants $c_1$ and $c_2$.

The Kloosterman sum is defined by
\bna
S(m,n;c)=\sideset{}{^\star}\sum_{a\bmod c}e\left(\frac{ma+n\overline{a}}{c}\right),
\ena
where $\sideset{}{^\star}\sum$ restricts the summation to the primitive residue classes,
and $\overline{a}$ denotes the multiplicative inverse of $a$ modulo $c$.
We recall here Weil's bound in \cite{Weil} for the Kloosterman sum
\bna
|S(m,n;c)|\leq (m,n,c)^{1/2}c^{1/2}\tau(c).
\ena

Let $J_\nu(z)$, $I_\nu(z)$ and $K_\nu(z)$ be the Bessel functions of the first kind,
modified Bessel function of the first kind and
modified Bessel function of the second kind, respectively (see Watson \cite{Watson}).
Recall that $J_\nu(z)$ and $I_\nu(z)$ are entire in $\nu$ for fixed $z\neq 0$, and for $\Re(\nu)>-\frac{1}{2}$,
one has the integral representations \cite[pg. 912, Eq. 8.411(4) \& pg. 916, Eq. 8.431(3)]{GR},
\bna
J_\nu(z)=\frac{2(z/2)^\nu}{\Gamma(\frac{1}{2}+\nu)\Gamma(\frac{1}{2})}
\int_0^{\frac{\pi}{2}}\sin^{2\nu}\theta\cos(z\cos\theta)\mathrm{d}\theta,
\ena
and
\bna
I_\nu(z)=\frac{(z/2)^\nu}{\Gamma(\frac{1}{2}+\nu)\Gamma(\frac{1}{2})}\int_0^\pi
e^{\pm z\cos \theta}\sin^{2\nu} \theta\mathrm{d}\theta.
\ena
By Stirling's formula (see \cite[\S 8.4, Eq. (4.03)]{O}), for $x>0$,
we have
\bea\label{J-Bessel}
J_{\sigma+it}(x)\ll_\sigma
\left\{
\begin{aligned}
&x^\sigma, \quad && \text{if}\,\,|t|\leq 1,\\
&x^\sigma|t|^{-\sigma}e^{\pi|t|/2}, \quad &&\text{if}\,\,|t|\geq1,
\end{aligned}
\right.
\eea
and
\bea\label{I-Bessel}
I_{\sigma+it}(x)\ll_\sigma
\left\{
\begin{aligned}
&x^\sigma e^x, \quad && \text{if}\,\,|t|\leq 1,\\
&x^\sigma e^x|t|^{-\sigma}e^{\pi|t|/2}, \quad &&\text{if}\,\,|t|\geq1.
\end{aligned}
\right.
\eea
Moreover, we have the identity \cite[pg. 928, Eq. 8.485]{GR}
\bea\label{K-Bessel}
K_\nu(z)=\frac{\pi}{2}\frac{I_{-\nu}(z)-I_{\nu}(z)}{\sin (\pi \nu)}, \quad\text{for}\,\,\nu\,\, \text{is not an integer},
\eea
and the bound (e.g., \cite[Corollary 3.2]{GRS})
\bea\label{K-Bessel bound}
K_{2it}(x)\ll e^{-\pi t}t^{-\frac{1}{3}},\quad \text{for all}\,\, t>0 \,\,\text{and}\,\, x\geq 1.
\eea

Throughout the paper, we define
\bea\label{nu_j(n)}
\nu_j(n):=\rho_j(n)/\sqrt{\cosh(\pi t_j)},
\eea
and immediately obtain, $\nu_j(n)=\nu_j(1)\lambda_j(n).$
\subsection{Kuznetsov trace formula over even Hecke--Maass forms}
\begin{lemma}\label{Kuznetsov trace formula}
Let $h(t)$ be an even function which is holomorphic in the strip $|\Im(t)|\leq\frac{1}{2}+\varepsilon$ and
$h(t)\ll (1+|t|)^{-2-\varepsilon}$ with some $\varepsilon>0$. Then for integers $m,n\geq 1$,
\begin{equation*}
\begin{split}
&\sideset{}{^\prime}\sum_{j}h(t_j)|\nu_j(1)|^2\lambda_j(m)\lambda_j(n)
+\frac{1}{4\pi} \int_{-\infty}^\infty \frac{\tau_{it}(m)\tau_{it}(n)}{|\zeta(1+2it)|^2}h(t)\mathrm{d}t\\
=&\delta_{m,n}\mathcal{H}+\sum_{c>0}
\frac{1}{2c}\left(S(m,n;c)\mathcal{H}^+\left(\frac{4\pi\sqrt{mn}}{c}\right)+S(-m,n;c)\mathcal{H}^-\left(\frac{4\pi\sqrt{mn}}{c}\right)\right).
\end{split}
\end{equation*}
Here $\sideset{}{^\prime}\sum$ restricts the sum to even Hecke--Maass cusp forms,
$\tau_\nu(n)=\sum_{ab=n}\left(\frac{a}{b}\right)^\nu$ is the generalized divisor function,
\begin{equation*}
\begin{split}
\mathcal{H}=\frac{1}{8\pi^2}\int_{-\infty}^\infty h(t)\tanh(\pi t)t\, \mathrm{d}t,\\
\end{split}
\end{equation*}
and
\begin{equation*}
\begin{split}
\mathcal{H}^+(x)&=\frac{i}{2\pi}\int_{-\infty}^\infty J_{2it}(x)\frac{h(t)t}{\cosh(\pi t)}\,\mathrm{d}t,\\
\mathcal{H}^-(x)&=\frac{1}{\pi^2}\int_{-\infty}^\infty K_{2it}(x)\sinh(\pi t)h(t)t\mathrm{d}t.
\end{split}
\end{equation*}
\end{lemma}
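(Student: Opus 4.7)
The plan is to obtain the formula by restricting the standard (parity-unsorted) Kuznetsov trace formula for $\Gamma=\mathrm{SL}_2(\mathbb{Z})$ to the even spectrum via a symmetrization in the sign of $m$. I proceed in three stages.

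Stage 1 (the unrestricted Kuznetsov identity). Starting from the Poincar\'e series $P_m(z,s)=\sum_{\gamma\in\Gamma_\infty\backslash\Gamma}(\Im\gamma z)^s e(m\Re\gamma z)$ (for $m\ge1$ and $\Re s>1$), I compute $\langle P_m,P_n\rangle$ two different ways. The spectral side uses Parseval against the complete orthonormal basis consisting of \textbf{all} Maass cusp forms (even and odd) together with the unitary Eisenstein series $E(\cdot,\tfrac12+it)$; this yields a bilinear form in the Fourier coefficients $\rho_j(m)\overline{\rho_j(n)}/\cosh(\pi t_j)$ weighted by a specific integral kernel in $s$, plus the Eisenstein term already in the form $\tau_{it}(m)\tau_{it}(n)/|\zeta(1+2it)|^2$ up to elementary prefactors. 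The geometric side unfolds one Poincar\'e series along the Bruhat decomposition, producing $\delta_{m,n}$ from the trivial double coset and, from the non-trivial cosets, a series of Kloosterman sums $S(m,n;c)$ multiplied by a Bessel-type oscillatory integral. A Sears--Titchmarsh-type inversion of the integral transform in $s$, legitimized by contour shifts inside the holomorphy strip $|\Im t|\le\tfrac12+\varepsilon$, upgrades the resulting identity from the specific kernels produced by the Poincar\'e construction to arbitrary admissible $h$. Running the same computation with $m$ replaced by $-m$ produces the companion opposite-sign identity, whose Bessel integral is now exponentially (rather than oscillatorily) driven and yields $K_{2it}$ in place of $J_{2it}$, paired with $S(-m,n;c)$.

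Stage 2 (parity symmetrization). Writing $\rho_j(-m)=\epsilon_j\rho_j(m)$ with $\epsilon_j=+1$ for even $u_j$ and $\epsilon_j=-1$ for odd $u_j$, the average of the $(m,n)$ and $(-m,n)$ Kuznetsov identities kills the odd spectrum:
\[
\sideset{}{^\prime}\sum_{j}h(t_j)\,\frac{\rho_j(m)\overline{\rho_j(n)}}{\cosh(\pi t_j)}
=\tfrac12\sum_{j}h(t_j)\,\frac{(\rho_j(m)+\rho_j(-m))\overline{\rho_j(n)}}{\cosh(\pi t_j)}.
\]
Using $\nu_j(n)=\rho_j(n)/\sqrt{\cosh(\pi t_j)}$ and the Hecke relation $\nu_j(n)=\nu_j(1)\lambda_j(n)$ from \eqref{nu_j(n)}, the left side becomes $\sum_j'h(t_j)|\nu_j(1)|^2\lambda_j(m)\lambda_j(n)$, matching the spectral side of the statement. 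On the geometric side, averaging the two identities with weight $\tfrac12$ creates exactly the bracketed combination $\tfrac{1}{2c}\!\left[S(m,n;c)\mathcal{H}^+(\cdot)+S(-m,n;c)\mathcal{H}^-(\cdot)\right]$; the diagonal $\delta_{m,n}\mathcal{H}$ is contributed only by the same-sign formula (since $\delta_{-m,n}=0$ for $m,n\ge1$), so the factor of $\tfrac12$ gets absorbed into a redefinition of $\mathcal{H}$, producing the stated $(8\pi^2)^{-1}\!\int h(t)\tanh(\pi t)\,t\,dt$. The Eisenstein term is invariant under $m\mapsto-m$ (since $\tau_{it}$ is even in the sign of its argument), so it simply carries through unchanged.

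Stage 3 (verification and convergence). The explicit constants $i/(2\pi)$ and $1/\pi^2$ in $\mathcal{H}^\pm$ are then read off by matching the $J$- and $K$-Bessel transforms produced by Stage 1 against the definitions in the statement, a bookkeeping step that parallels the standard derivation (cf.\ Iwaniec, \emph{Spectral Methods}). The genuine technical obstacle is ensuring that the Kloosterman sum over $c$ converges absolutely and that the Bessel inversion is rigorous for the stated class of $h$; absolute convergence follows from Weil's bound $|S(\pm m,n;c)|\le(m,n,c)^{1/2}c^{1/2}\tau(c)$ combined with \eqref{J-Bessel}--\eqref{K-Bessel bound}, while the hypotheses that $h$ is holomorphic in $|\Im t|\le\tfrac12+\varepsilon$ with $h(t)\ll(1+|t|)^{-2-\varepsilon}$ are precisely what is required to shift contours across the gamma-function poles encountered during the inversion.
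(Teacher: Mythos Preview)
Your derivation via parity-symmetrization of the full Kuznetsov formula is the standard route and is correct in outline; it is in fact more detailed than what the paper does, since the paper gives no argument at all and simply refers the reader to Conrey--Iwaniec \cite{CI} for this lemma. So there is nothing to compare beyond noting that your Stage~1--Stage~2 sketch is exactly how the cited references obtain the even-form version, and your convergence remarks in Stage~3 are the appropriate justifications.
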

\begin{proof}
See e.g., Conrey--Iwaniec \cite{CI}.
\end{proof}
By \cite{HL2, Iwaniec} and the standard bounds of the Riemann zeta-function (e.g., \cite[\S 3.11]{Titchmarsh}), we have
\bea\label{bounds}
(\log t_j)^{-1}\ll |\nu_j(1)|^2\ll \log t_j,
\qquad
(\log(1+|t|))^{-1}\ll \zeta(1+2it)\ll \log(1+|t|).
\eea
According to \cite[Theorem 6]{Kuznetsov} and \cite{Hejhal}, respectively, we have
\begin{equation}\label{classic bounds}
\begin{split}
\sum_{t_j\leq T}|\nu_j(1)|^2=\frac{T^2}{\pi^2}+O(T\log T),
\end{split}
\end{equation}
and for some constants $\beta_1$, $\beta_2$,
\begin{equation*}\label{classic bound1}
\begin{split}
\sum_{t_j\leq T}1&=\frac{T^2}{12}+\beta_1 T\log T+\beta_2 T+O\bigg(\frac{T}{\log T}\bigg).
\end{split}
\end{equation*}
\begin{remark}
Note that Li \cite[Corollary 1.2]{Li} proved the currently best error term of the (weighted)
Weyl law in \eqref{classic bounds} is $O(T)$.
\end{remark}

As a corollary of Lemma \ref{Kuznetsov trace formula},
we give the following lemma and postpone its proof to Section \ref{Prove Lemma}.
\begin{lemma}\label{corollary1}
Let $T\geq 3$ and large parameter $M$ with $T^\varepsilon\leq M\leq T^{1-\varepsilon}$.
For any integers $m,n\geq 1$ satisfying $\max\{m,n\}\leq T^{1-\varepsilon}$ and any $\varepsilon$ with $0<\varepsilon<1/2$, we have
\bna
\sideset{}{^\prime}\sum_{j} e^{-\frac{(t_j-T)^2}{M^2}}|\nu_j(1)|^2\lambda_j(m)\lambda_j(n)=\frac{1}{4\pi^{3/2}}MT\,\delta_{m,n}
+O_{\varepsilon}\big(MT^\varepsilon\big).
\ena
\end{lemma}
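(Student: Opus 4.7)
The plan is to apply Kuznetsov's formula (Lemma \ref{Kuznetsov trace formula}) with the even test function
\[
h(t)=e^{-(t-T)^{2}/M^{2}}+e^{-(t+T)^{2}/M^{2}},
\]
which is entire in $t$ and Gaussian-decaying in any fixed horizontal strip, so the hypotheses hold. After transposing the Eisenstein integral to the right-hand side, the target sum equals $\delta_{m,n}\mathcal{H}$ minus the Eisenstein integral plus the Kloosterman contribution, and these three pieces are estimated in turn.

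For the diagonal term, since $\tanh(\pi t)\,t$ is even one has $\mathcal{H}=\tfrac{1}{4\pi^{2}}\int e^{-(t-T)^{2}/M^{2}}\tanh(\pi t)\,t\,dt$. Replacing $\tanh(\pi t)$ by $1$ at cost $O(e^{-cT})$ and evaluating $\int e^{-(t-T)^{2}/M^{2}} t\,dt = TM\sqrt{\pi}$ yields $\mathcal{H}=\tfrac{MT}{4\pi^{3/2}}+O(M)$, matching the advertised main term. For the Eisenstein contribution, the pointwise bounds $|\tau_{it}(m)\tau_{it}(n)|\leq \tau(m)\tau(n)\ll (mn)^{\varepsilon}$ and $|\zeta(1+2it)|^{-1}\ll \log(2+|t|)$ from \eqref{bounds}, combined with $\int|h(t)|\,dt\ll M$, immediately give a bound $\ll MT^{\varepsilon}$.

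The main obstacle is showing that the Kloosterman contribution
\[
\sum_{c\geq 1}\frac{1}{2c}\bigl[S(m,n;c)\mathcal{H}^{+}(x_{c})+S(-m,n;c)\mathcal{H}^{-}(x_{c})\bigr],\qquad x_{c}:=\frac{4\pi\sqrt{mn}}{c},
\]
is absorbed in $O_{\varepsilon}(MT^{\varepsilon})$. The plan is to prove $\mathcal{H}^{\pm}(x_{c})\ll_{A}T^{-A}$ for every $A>0$, with enough decay in $c$ for the ensuing sum to converge. For moderate $x_{c}$, one inserts the Hankel/Debye asymptotic expansions of $J_{2it}$ (and the analog for $K_{2it}$ via \eqref{K-Bessel}) and applies Stirling to cancel the $\cosh(\pi t)$ factor; the resulting oscillatory integrand has phase derivative in $t$ of size $\asymp\log(4t/x_{c})\geq \varepsilon\log T$ throughout the Gaussian support (thanks to $\sqrt{mn}\leq T^{1-\varepsilon}$), so repeated integration by parts produces the desired polynomial decay. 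For very small $x_{c}$, one instead shifts the contour of the $t$-integral into the lower half-plane and exploits the power series $J_{\nu}(x)\sim(x/2)^{\nu}/\Gamma(\nu+1)$ to gain arbitrary $x_{c}$-decay, while the residues at the poles of $1/\cosh(\pi t)$ are negligible since $h(-i(k+1/2))$ is super-polynomially small given $T/M\geq T^{\varepsilon}$. Combining these transform bounds with Weil's inequality $|S(\pm m,n;c)|\ll c^{1/2}\tau(c)(m,n,c)^{1/2}$ then closes the estimate. The key delicacy is uniformity in $c$ and in the argument $x_{c}$; the non-vanishing of $\log(4t/x_{c})$ (guaranteed by $\sqrt{mn}\leq T^{1-\varepsilon}$) is what keeps the stationary-phase analysis nondegenerate across the full range.
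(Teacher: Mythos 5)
Your plan reproduces the paper's proof in all essentials: symmetrize the weight to the even test function $h(t)=e^{-(t-T)^2/M^2}+e^{-(t+T)^2/M^2}$, apply Lemma \ref{Kuznetsov trace formula}, extract $\mathcal{H}=\frac{MT}{4\pi^{3/2}}+O(M)$ from the diagonal, bound the Eisenstein integral trivially by $MT^{\varepsilon}$, kill the $J$-transform by a contour shift (using $\sqrt{mn}\le T^{1-\varepsilon}$), and split the $K$-transform according to the size of $x_c$, treating small $x_c$ via \eqref{K-Bessel} and a contour shift and large $x_c$ by non-stationary phase with $\phi_Y'(t)=2\arccosh(2t/Y)\asymp\log T$. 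The one step that does not hold as you state it is the target bound $\mathcal{H}^{-}(x_c)\ll_A T^{-A}$ in the range $c<\sqrt{mn}$. There the method you describe — inserting the uniform asymptotic $K_{2it}(Y)=\sqrt{2\pi}\,e^{-\pi t}(4t^2-Y^2)^{-1/4}\{\sin\phi_Y(t)+O(t^{-1})\}$ and integrating by parts — cannot produce superpolynomial decay, because the $O(t^{-1})$ correction is not given as an oscillatory term: integrated against $h(t)\,t\sinh(\pi t)$ it already contributes $\asymp MT^{-1/2}$, which no amount of integration by parts on the main term removes. So your stated strategy (``$T^{-A}$ per modulus, then sum over $c$'') stalls at this point. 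The repair is exactly what the paper does: accept $\mathcal{H}^{-}(x_c)\ll MT^{-1/2}(\log T)^{-1}$ per modulus, and recover the loss only in the $c$-sum, using Weil's bound and $c<\sqrt{mn}\le T^{1-\varepsilon}$ so that $\sum_{c\le\sqrt{mn}}(m,n,c)^{1/2}\tau(c)c^{-1/2}\ll T^{1/2+\varepsilon}$, giving a total of $O(MT^{\varepsilon})$ — precisely the error term claimed in Lemma \ref{corollary1}. With that single adjustment your argument is the paper's.
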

\subsection{The zero-density estimate}
We will need the following weighted version of zero-density estimate
in the spectral aspect for the $L$-functions associated to even Hecke--Maass forms
which was established by Liu--Streipel \cite[Theorem 1.3]{LS24}.
\begin{lemma}\label{zero-density spectral}
For $\sigma>\frac{1}{2}$ and $H>0$,
we let $$N(\sigma, H; u_j):=\#\{\rho=\beta+i\gamma \,|\, L(\rho,u_j)=0, \sigma<\beta, |\gamma|<H\},$$
and
\bna
N(\sigma, H):=\frac{1}{\mathcal{H}}\sideset{}{^\prime}\sum_j |\nu_j(1)|^2 h(t_j)N(\sigma, H; u_j),
\ena
where $h(t)$ is defined by
\bea\label{definition h(t)}
h(t)=e^{-\frac{(t-T)^2}{M^2}}+e^{-\frac{(t+T)^2}{M^2}},\quad\text{for}\,\, 1<M<T.
\eea
Let $\frac{1}{2}+\frac{2}{\log T}<\sigma< 1$. Thus for some sufficiently small $\delta_1,\theta>0$, we have
\bna
N(\sigma, H)\ll HT^{-\theta(\sigma-1/2)}\log T
\ena
uniformly in $3/\log T<H<T^{\delta_1}$.
\end{lemma}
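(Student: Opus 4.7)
The plan is to adapt the classical mollifier-based zero-density method to the spectral family of even Hecke--Maass forms. For each such $u_j$, I would introduce a Dirichlet polynomial mollifier
\[
M_j(s,y)=\sum_{n\leq y}\mu_j(n)n^{-s},
\]
where the $\mu_j(n)$ are the coefficients of $L(s,u_j)^{-1}$ and $y=T^{\delta}$ for a small $\delta>0$ to be chosen. Multiplying by $L(s,u_j)$ and applying the approximate functional equation represents $L(s,u_j)M_j(s,y)-1$ as a Dirichlet polynomial $R_j(s,y)$ whose effective length is controlled by $T/y$. At any zero $\rho=\beta+i\gamma$ of $L(\cdot,u_j)$ with $\beta>\sigma>\tfrac{1}{2}$, one has $|R_j(\rho,y)|=1$, so a standard Gallagher/Jensen-type lemma bounds $N(\sigma,H;u_j)$ by a mean value of the form
\[
\int_{-H}^{H}\bigl|R_j(\sigma+it,y)\bigr|^{2k}\,\dd t
\]
up to negligible boundary contributions, for any positive integer $k$ to be chosen.

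Averaging spectrally against $|\nu_j(1)|^2 h(t_j)$ and expanding the $2k$-th power reduces the problem to evaluating
\[
\sideset{}{^\prime}\sum_{j} |\nu_j(1)|^2 h(t_j)\,\lambda_j(m)\lambda_j(n)
\]
with suitable coefficients, to which the Kuznetsov trace formula (Lemma \ref{Kuznetsov trace formula}) applies. The diagonal contribution is essentially
\[
\frac{MT}{4\pi^{3/2}}\sum_{n\leq y^k}\frac{|c_k(n)|^2}{n^{2\sigma}},
\]
where $c_k$ denotes the coefficients of the $k$-fold Dirichlet convolution associated to $R_j$; bounding this sum by $(y^k)^{1-2\sigma+\varepsilon}=T^{-\delta k(2\sigma-1)+\varepsilon}$ yields the target saving $T^{-\theta(\sigma-1/2)}$ once $k$ and $\delta$ are suitably fixed. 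The off-diagonal Kloosterman contribution is controlled by Weil's bound combined with the rapid decay that the kernels $\mathcal{H}^{\pm}$ inherit from the Gaussian localization of $h$, and remains admissible provided $y^k\leq T^{2-\varepsilon}$.

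The main difficulty is balancing $(y,k)$ simultaneously against $\sigma$ and $H$. One needs $y$ large enough that $R_j$ is genuinely short, yet $y^k$ small enough that the off-diagonal Kloosterman mass is tolerable; moreover the saving $\theta$ must be uniform for all $\sigma\in(\tfrac{1}{2}+\tfrac{2}{\log T},1)$, which forces the mollifier coefficients to be estimated independently of how close $\sigma$ is to $1/2$. The non-Ramanujan bound $|\lambda_j(n)|\leq n^{\vartheta}\tau(n)$ with $\vartheta=7/64$ contributes only a harmless $n^{O(\vartheta)}$ factor in the diagonal bookkeeping, but it must be tracked carefully in the off-diagonal where Weil's estimate is multiplied against $(mn)^{\vartheta}$. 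Finally, the hypothesis $H<T^{\delta_1}$ is exactly what is needed so that the extra $t$-integration in the Dirichlet polynomial mean value does not dilute the diagonal saving, and so that the effective $c$-range in Kuznetsov stays bounded by a small power of $T$.
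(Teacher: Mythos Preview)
The paper does not supply its own proof of this lemma: it is quoted as \cite[Theorem~1.3]{LS24} (Liu--Streipel), and the authors use it as a black box. Your proposal therefore goes well beyond what the paper does, attempting to reconstruct the content of that reference.

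As a plan, the architecture you describe---mollify $L(s,u_j)$, detect zeros via $|L M_j-1|=1$, then average over the spectrum with Kuznetsov---is indeed the standard route, and is presumably what Liu--Streipel carry out (their title ``twisted second moment'' names exactly the Kuznetsov input you identify). Two points in your sketch are blurred, however. First, the passage from $|R_j(\rho,y)|=1$ to a bound $N(\sigma,H;u_j)\ll\int_{-H}^{H}|R_j|^{2k}\,\mathrm{d}t$ is not a direct consequence of a Jensen-type lemma; the usual zero-detection (Bohr--Landau/Montgomery, or the Selberg-style argument used in the holomorphic analogues \cite{LS,SW}) proceeds via a convexity estimate together with a mollified mean value at a single abscissa, and the exponent $2k$ plays no role once a mollifier is present---the saving comes from the mollifier forcing the diagonal coefficients to vanish for $n\le y$, not from amplification. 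Second, ``applying the approximate functional equation'' to write $L M_j-1$ as a Dirichlet polynomial introduces a dual sum and gamma factors depending on $t_j$, so the object one actually averages over $j$ is a twisted second moment of $L$ itself (with twist $\lambda_j(\ell)$ coming from the mollifier), not simply a polynomial in the $\lambda_j(p)$; establishing that asymptotic is the genuine technical core and is precisely the content of \cite{LS24}.
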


\section{An approximation of $S_j(t)$}
In this section we will prove several lemmas and derive an approximation of $S_j(t)$.
We denote by $\rho=\beta+i\gamma$ a typical zero of $L(s,u_j)$ inside the critical strip, i.e., $0<\beta<1$.
For $\Re(s)>1$, we have the Dirichlet series expansion for the logarithmic derivative of $L(s,u_j)$,
\bea\label{Log derivative definition spectral}
-\frac{L'}{L}(s,u_j)=\sum_{n=1}^\infty \frac{\Lambda(n)C_j(n)}{n^s},
\eea
where $\Lambda(n)$ denotes the von Mangoldt function, and
\bea\label{Log derivative coefficient spectral}
C_j(n)=\begin{cases}
\alpha_j(p)^m+\beta_j(p)^m, \,&\text{if}\,\,n=p^m\,\,\text{for a prime}\,\,p,\\
0, \,& \text{otherwise}.
\end{cases}
\eea
Using the facts that
\bna
C_j(p^m)=\alpha_j(p)^m+\alpha_j(p)^{-m},
\qquad
\text{and}
\qquad
\lambda_j(p^\nu)=\sum_{j=0}^\nu\alpha_j(p)^{\nu-2j},
\ena
we immediately check that
\bea\label{C_j prime power}
C_j(p)=\lambda_j(p), \quad \text{and}\quad C_j(p^r)=\lambda_j(p^r)-\lambda_j(p^{r-2}),\,\,\text{for}\,\,r\geq 2.
\eea
\begin{lemma}\label{lemma 1 spectral}
Let $x>1$. For $s\neq \rho$, and $s\neq \pm it_j-2m$, we have the following identity
\begin{equation*}
\begin{split}
\frac{L'}{L}(s,u_j)=&-\sum_{n\leq x^3} \frac{C_j(n)\Lambda_x(n)}{n^s}
-\frac{1}{\log^2x}\sum_\rho\frac{x^{\rho-s}(1-x^{\rho-s})^2}{(\rho-s)^3}\\
&-\frac{1}{\log^2x}\sum_{m=0}^\infty\frac{x^{\pm it_j-2m-s}(1-x^{\pm it_j-2m-s})^2}{(\pm it_j-2m-s)^3},
\end{split}
\end{equation*}
where
\bna
\Lambda_x(n)=\begin{cases}
\Lambda(n), \,&\textit{if }\,\,n\leq x,\\
\Lambda(n)\frac{\log^2(x^3/n)-2\log^2(x^2/n)}{2\log^2x}, \,& \textit{if }\,\,x\leq n\leq x^2,\\
\Lambda(n)\frac{\log^2(x^3/n)}{2\log^2x}, \,& \textit{if }\,\,x^2\leq n\leq x^3,\\
0, \,& \textit{if }\,\,n\geq x^3.\\
\end{cases}
\ena
\end{lemma}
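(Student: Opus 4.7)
The plan is a standard Selberg-type contour-shift argument adapted to $L(s,u_j)$. The starting point is the Mellin identity
\begin{equation*}
\frac{1}{2\pi i}\int_{(c)} \frac{x^w(1-x^w)^2}{w^3 \log^2 x}\, y^{-w}\, dw \;=\; \frac{\Lambda_x(y)}{\Lambda(y)},
\end{equation*}
valid for $c>0$ and $y\ge 1$. This comes from expanding $x^w(1-x^w)^2 = x^{3w} - 2x^{2w} + x^w$ and applying the familiar formula $(2\pi i)^{-1}\int_{(c)}\alpha^w w^{-3}\,dw = \tfrac{1}{2}(\log\alpha)^2\mathbf{1}_{\alpha>1}$ with $\alpha = x^k/y$ for $k=1,2,3$. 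Checking the four ranges of $y$ against the definition of $\Lambda_x$, e.g., for $y<x$, via the substitution $u = \log(x/y)$ and the polynomial identity $(u+2\log x)^2-2(u+\log x)^2+u^2 = 2\log^2 x$, reproduces Selberg's piecewise weight exactly.

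Multiplying through by $\Lambda(y)C_j(y)$, summing over $y$, and using \eqref{Log derivative definition spectral} with $c$ chosen so large that $\Re(s+w)>1$ (allowing the termwise interchange by absolute convergence), I would obtain
\begin{equation*}
\sum_{n\le x^3}\frac{\Lambda_x(n)C_j(n)}{n^s} \;=\; -\frac{1}{2\pi i}\int_{(c)} \frac{L'}{L}(s+w,u_j)\cdot \frac{x^w(1-x^w)^2}{w^3\log^2 x}\,dw.
\end{equation*}
Next I would shift the contour leftward to $\Re(w) = -U$ along a sequence $U_k\to\infty$ chosen to avoid the ordinates of poles, and collect residues. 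Using the local expansion $x^w(1-x^w)^2 = w^2\log^2 x + O(w^3)$, the kernel has a simple pole at $w=0$ with residue $1$; since $L'/L(s+w,u_j)$ is holomorphic at $w=0$ under the hypotheses $s\neq\rho$ and $s\neq\pm it_j-2m$, this pole contributes $-L'/L(s,u_j)$ on the right side, which after rearrangement becomes the left side of the claimed identity. Each non-trivial zero $\rho$ gives a simple pole at $w=\rho-s$ producing the summand $-x^{\rho-s}(1-x^{\rho-s})^2/((\rho-s)^3\log^2 x)$, and the trivial zeros $w=\pm it_j-2m-s$ arising from the Gamma factors $\Gamma((s\pm it_j)/2)$ in the functional equation yield exactly the final sum in the lemma.

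The main technical point is vanishing of the contour integral on $\Re(w) = -U$ as $U\to\infty$ together with absolute convergence of the zero-sums. I would use the functional equation plus Stirling's formula to bound $|(L'/L)(s+w,u_j)|\ll \log(|w|+t_j+2)$ on suitable horizontal segments; combined with the $|w|^{-3}$ decay of the kernel, this drives both the far-left vertical integral and the horizontal truncations to zero. Absolute convergence of the sum over non-trivial zeros follows from the Riemann--von Mangoldt bound $\#\{\rho:|\gamma-T|\le 1\}\ll \log(|T|+t_j+2)$ paired with the $|\rho-s|^{-3}$ decay, and the trivial-zero sum converges by inspection since $|\pm it_j-2m-s|\gg m+1$. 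The only delicate point I expect is choosing the sequence $U_k$ to stay uniformly away from the imaginary ordinates of zeros, which is handled by a standard pigeonhole argument on heights.
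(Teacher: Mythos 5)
Your proposal is correct and follows essentially the same route as the paper's proof: the same Mellin kernel $x^w(1-x^w)^2/(w^3\log^2 x)$ built from the discontinuous integral $(2\pi i)^{-1}\int_{(c)}\alpha^w w^{-3}\,\mathrm{d}w=\tfrac12(\log\alpha)^2\mathbf{1}_{\alpha\ge 1}$, the same identity expressing $\sum_{n\le x^3}C_j(n)\Lambda_x(n)n^{-s}$ as a contour integral against $\frac{L'}{L}(s+w,u_j)$, and the same leftward shift collecting the residues at $w=0$, $w=\rho-s$, and the trivial zeros $w=\pm it_j-2m-s$. You supply more detail than the paper (the explicit verification of the piecewise weight $\Lambda_x$ and the convergence of the shifted contour), but the argument is the same.
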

\begin{proof}
Recall the discontinuous integral
\bna
\frac{1}{2\pi i}\int_{(\alpha)}\frac{y^s}{s^3}\mathrm{d}s=
\begin{cases}
\frac{\log^2 y}{2}, \,&\textit{if }\,\, y\geq 1,\\
0, \,& \textit{if }\,\, 0<y\leq 1
\end{cases}
\ena
for $\alpha>0$.
It follows from \eqref{Log derivative definition spectral} and \eqref{Log derivative coefficient spectral} that
\bna
-\log^2x\sum_{n=1}^\infty \frac{C_j(n)\Lambda_x(n)}{n^s}=
\frac{1}{2\pi i}\int_{(\alpha)}\frac{x^v(1-x^v)^2}{v^3}\frac{L'}{L}(s+v,u_j)\mathrm{d}v,
\ena
where $\alpha=\max \{2, 1+\Re(s)\}$. By moving the line of integration all way to the left, we
pick up the residues at $v=0$, $v=\rho-s$ and $v=\pm it_j-2m-s$ ($m= 0,1,2, \cdots$), and deduce that
\begin{equation*}
\begin{split}
&\frac{1}{2\pi i}\int_{(\alpha)}\frac{x^v(1-x^v)^2}{v^3}\frac{L'}{L}(s+v,u_j)\mathrm{d}v\\
=&\frac{L'}{L}(s,u_j)\log^2x+\sum_\rho\frac{x^{\rho-s}(1-x^{\rho-s})^2}{(\rho-s)^3}
+\sum_{m=0}^\infty\frac{x^{\pm it_j-2m-s}(1-x^{\pm it_j-2m-s})^2}{(\pm it_j-2m-s)^3}.
\end{split}
\end{equation*}
Thus we complete the proof of lemma.
\end{proof}
\begin{lemma}\label{lemma 2 spectral}
For $s=\sigma+it$, $s'=\sigma'+it'$ such that $1/2\leq \sigma, \sigma'\leq 10$, $s\neq \rho$, $s'\neq \rho$,
we have
\bna\label{Im spectral}
\Im\left(\frac{L'}{L}(s,u_j)-\frac{L'}{L}(s',u_j)\right)=\Im\sum_\rho\left(\frac{1}{s-\rho}-\frac{1}{s'-\rho}\right)+O(1),
\ena
and
\bna\label{Re spectral}
\Re\frac{L'}{L}(s,u_j)=\sum_\rho\frac{\sigma-\beta}{(\sigma-\beta)^2+(t-\gamma)^2}+O(\log (|t|+t_j+1)).
\ena
\end{lemma}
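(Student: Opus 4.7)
The argument is the classical Hadamard-product approach, in the style of Titchmarsh's treatment of $\zeta(s)$, adapted here to $L(s,u_j)$. Since $\Lambda(s,u_j)$ is entire of order $1$, Hadamard factorization yields $\Lambda(s,u_j)=e^{A+Bs}\prod_{\rho}(1-s/\rho)\,e^{s/\rho}$, and combining this with the definition of $\Lambda$ produces the identity
\begin{equation*}
\frac{L'}{L}(s,u_j)=B+\log \pi-\tfrac{1}{2}\psi\!\left(\tfrac{s+it_j}{2}\right)-\tfrac{1}{2}\psi\!\left(\tfrac{s-it_j}{2}\right)+\sum_\rho\!\left(\frac{1}{s-\rho}+\frac{1}{\rho}\right),
\end{equation*}
with $\psi=\Gamma'/\Gamma$ and the zero-sum absolutely convergent after the Hadamard normalization.

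For the first identity, I subtract the above at $s'$ from the version at $s$: the constants $B$, $\log \pi$ and the $\sum 1/\rho$ tail cancel identically, leaving only differences of $\psi$-values together with the zero sum appearing in the claim. Stirling's formula gives $\psi(z)=\log z+O(|z|^{-1})$ in $\Re(z)>0$, hence $\Im\psi(z)=\arg z+O(|z|^{-1})$. Since $\sigma,\sigma'\in[1/2,10]$, the real parts of $(s\pm it_j)/2$ and $(s'\pm it_j)/2$ are positive and bounded below by $1/4$, so the principal argument stays in $(-\pi/2,\pi/2)$ and the difference of the imaginary parts of $\psi$ is $O(1)$. Taking imaginary parts of the identity yields the first claim.

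For the second identity, I take real parts of the Hadamard formula at $s$. Stirling gives $\Re\psi(z)=\log|z|+O(1)$ for $\Re(z)>0$, so the three archimedean terms contribute $O(\log(|t|+t_j+1))$. What remains is to show $\Re B+\sum_\rho\Re(1/\rho)=O(\log(t_j+1))$. I will do this by evaluating the Hadamard identity at the single point $s=2$: on the one hand $|L'/L(2,u_j)|=O(1)$ by absolute convergence of the Dirichlet series in \eqref{Log derivative definition spectral} together with the Kim--Sarnak bound \eqref{lambda_j(n) bound}; on the other hand the archimedean terms at $s=2$ are $O(\log(t_j+1))$, and $\sum_\rho\Re\bigl(1/(2-\rho)\bigr)\asymp\sum_\rho(1+\gamma^2)^{-1}$. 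A dyadic decomposition in $|\gamma|$, combined with the standard Weyl-type count $N(T+1,u_j)-N(T,u_j)\ll\log(|T|+t_j+1)$ coming from the approximate functional equation for $N(T,u_j)$, gives $\sum_\rho(1+\gamma^2)^{-1}\ll\log(t_j+1)$. Solving for $\Re B+\sum_\rho\Re(1/\rho)$ then finishes the second claim.

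\emph{Main obstacle.} The only non-routine ingredient is handling the conditionally convergent constant $\Re B+\sum_\rho\Re(1/\rho)$: neither piece is individually controllable, but their sum is pinned down via the $s=2$ evaluation above. Everything else is Stirling together with a unit-interval zero-count that is standard for $\mathrm{GL}_2$ Hecke--Maass $L$-functions, so I expect the write-up to be essentially mechanical once this normalization step is done.
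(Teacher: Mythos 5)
Your proposal is correct and follows essentially the same route as the paper: the Hadamard factorization of $\Lambda(s,u_j)$ combined with bounds for $\Gamma'/\Gamma(z)$ on $\Re(z)\geq 1/4$. The only minor difference is that the paper disposes of the additive constant by citing the exact identity $\Re(b_j)=-\Re\sum_\rho 1/\rho$ from Iwaniec--Kowalski, whereas you re-derive an $O(\log(t_j+1))$ bound for that combination by evaluating the formula at $s=2$ and using a unit-interval zero count; both are standard and suffice for the stated error terms.
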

\begin{proof}
By the Hadamard factorization theorem of the entire function $\Lambda(s,u_j)$, we have
\bna
\frac{L'}{L}(s,u_j)=b_j+\sum_\rho\left(\frac{1}{s-\rho}+\frac{1}{\rho}\right)
-\frac{1}{2}\frac{\Gamma'}{\Gamma}\left(\frac{s+it_j}{2}\right)
-\frac{1}{2}\frac{\Gamma'}{\Gamma}\left(\frac{s-it_j}{2}\right)+\log\pi,
\ena
for some $b_j\in \mathbb{C}$ with $\Re(b_j)=-\Re\sum_\rho\frac{1}{\rho}$ (see \cite[Proposition 5.7 (3)]{IK}).
Note that for $z\not\in -\mathbb{N}$,
\bea\label{log derivative gamma spectral}
\frac{\Gamma'}{\Gamma}(z)=-\gamma+\sum_{k\geq 1}\left(\frac{1}{k}-\frac{1}{k-1+z}\right),
\eea
where $\gamma$ is the Euler constant.
Using \eqref{log derivative gamma spectral}, we get that for $\frac{1}{4}\leq u$,
\bna
\Im\frac{\Gamma'}{\Gamma}(u+iv)=\sum_{k\geq 1}\frac{v}{(k+u-1)^2+v^2}\ll 1.
\ena
Combined with the asymptotic formula of the logarithmic derivative of $\Gamma(z)$,
\bna\label{derivative gamma spectral}
\frac{\Gamma'}{\Gamma}(z)=\log z-\frac{1}{2z}+O_\varepsilon\left(\frac{1}{|z|^2}\right),
\quad\text{for}\,\, |\arg z|\leq \pi-\varepsilon,
\ena
we complete the proof of lemma.
\end{proof}
Let $x\geq 4$. We define
\bea\label{sigma_x spectral}
\sigma_x=\sigma_{x,j}=\sigma_{x,j,t}:=\frac{1}{2}+2\max_\rho\left\{\left|\beta-\frac{1}{2}\right|, \frac{5}{\log x}\right\},
\eea
where $\rho=\beta+i\gamma$ runs through the zeros of $L(s,u_j)$ for which
\bna\label{condition 1 spectral}
|t-\gamma|\leq\frac{x^{3|\beta-1/2|}}{\log x}.
\ena

The following lemma is to display the dependence of $\sigma_x$ as needed.
\begin{lemma}\label{lemma 3 spectral}
Let $x\geq 4$. For $\sigma\geq \sigma_x$, we have
\begin{equation*}
\begin{split}
\frac{L'}{L}(\sigma+it,u_j)=&-\sum_{n\leq x^3} \frac{C_j(n)\Lambda_x(n)}{n^{\sigma+it}}
+O\left(x^{1/4-\sigma/2}\left|\sum_{n\leq x^3} \frac{C_j(n)\Lambda_x(n)}{n^{\sigma_x+it}}\right|\right)\\
&+O\left(x^{1/4-\sigma/2}\log(|t|+t_j+1)\right),
\end{split}
\end{equation*}
and
\bna
\sum_\rho\frac{\sigma_x-1/2}{(\sigma_x-\beta)^2+(t-\gamma)^2}=
O\left(\left|\sum_{n\leq x^3} \frac{C_j(n)\Lambda_x(n)}{n^{\sigma_x+it}}\right|\right)
+O\big(\log(|t|+t_j+1)\big).
\ena
\end{lemma}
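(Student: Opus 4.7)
The plan is to apply Lemma \ref{lemma 1 spectral} at $s = \sigma + it$ with $\sigma \geq \sigma_x$ and then estimate the two resulting error terms, namely the sum $\Sigma_{\rho}$ over the non-trivial zeros and the sum $\Sigma_{\mathrm{triv}}$ over the trivial zeros. The trivial zero contribution is handled at once: for $\sigma \geq 1/2$ and $x \geq 4$, each summand is $O\big(x^{-\sigma}/(|t \mp t_j|^3 + (2m+\sigma)^3)\big)$, and since $x^{-\sigma} \leq x^{1/4-\sigma/2}$ for $\sigma \geq -1/2$, the whole series contributes $O\big(\log^2 x \cdot x^{1/4-\sigma/2}\log(|t|+t_j+1)\big)$, which, after dividing by $\log^2 x$, is absorbed into the error in the statement.

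The main task is to bound $\Sigma_\rho = \sum_\rho x^{\rho-s}(1-x^{\rho-s})^2/(\rho-s)^3$. I would split the zeros into the close set $\mathcal{C} = \{\rho : |t-\gamma| \leq x^{3|\beta-1/2|}/\log x\}$ and its complement $\mathcal{F}$. The defining property \eqref{sigma_x spectral} of $\sigma_x$ forces every $\rho \in \mathcal{C}$ to satisfy $|\beta-1/2| \leq (\sigma_x-1/2)/2$, hence $\beta \leq 1/4 + \sigma_x/2 \leq 1/4 + \sigma/2$, which yields the saving $x^{\beta - \sigma} \leq x^{1/4-\sigma/2}$ required by the statement. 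Using $|1-x^{\rho-s}|^2 \ll \min\big((|\rho-s|\log x)^2, 1\big)$ together with the lower bound $\sigma - \beta \geq 5/\log x$ built into $\sigma_x \geq 1/2 + 10/\log x$, each close summand is controlled, and the remaining count of close zeros is absorbed into $\sum_\rho (\sigma_x-1/2)/((\sigma_x-\beta)^2+(t-\gamma)^2)$ --- which is precisely the quantity appearing in the second assertion. Zeros in $\mathcal{F}$ are easier: the denominator satisfies $|\rho-s|^3 \geq |t-\gamma|^3 > x^{9|\beta-1/2|}/\log^3 x$, and combining this with $|x^{\rho-s}(1-x^{\rho-s})^2| \ll x^{\beta-\sigma}+x^{3(\beta-\sigma)}$ yields the required savings after summing against the Weyl zero-count $\ll \log(|t|+t_j+|\gamma|+1)$.

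For the second assertion, I would apply Lemma \ref{lemma 2 spectral} at $s' = \sigma_x+it$ to get $\sum_\rho (\sigma_x-\beta)/((\sigma_x-\beta)^2 + (t-\gamma)^2) = \Re (L'/L)(\sigma_x+it, u_j) + O(\log(|t|+t_j+1))$, and then invoke the first statement at $\sigma = \sigma_x$ to replace $(L'/L)(\sigma_x+it, u_j)$ by the Dirichlet polynomial $-\sum_{n\leq x^3}C_j(n)\Lambda_x(n)n^{-\sigma_x-it}$ plus admissible error. The passage from the numerator $\sigma_x - \beta$ to $\sigma_x - 1/2$ is handled via the identity $\sigma_x - 1/2 = (\sigma_x - \beta) + (\beta - 1/2)$, with the $(\beta-1/2)$-piece controlled by the functional-equation symmetry $\rho \leftrightarrow 1-\bar\rho$, which preserves $|t-\gamma|$ and flips the sign of $\beta-1/2$. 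The hardest part, I expect, is the delicate treatment of close zeros with both $|\beta-\sigma|$ and $|t-\gamma|$ near-critical: here the quadratic vanishing $(1-x^{\rho-s})^2 \sim (\rho-s)^2\log^2 x$ as $\rho \to s$ must be balanced against the cubic growth $|\rho-s|^{-3}$, and it is exactly the $5/\log x$ slack built into \eqref{sigma_x spectral} that tips the scales.
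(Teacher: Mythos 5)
Your overall strategy --- apply Lemma \ref{lemma 1 spectral}, dispose of the trivial-zero tail, split the nontrivial zeros according to whether $|t-\gamma|\leq x^{3|\beta-1/2|}/\log x$, and extract the saving $x^{1/4-\sigma/2}$ from the defining property of $\sigma_x$ --- is exactly the Selberg-type argument that the paper invokes by citing \cite[Lemma 4.3]{LS} and \cite[Lemma 3.3]{SW} (the paper gives no details itself). However, as written your proof is circular: you prove the first assertion by absorbing the close-zero contribution into $\sum_\rho(\sigma_x-1/2)/((\sigma_x-\beta)^2+(t-\gamma)^2)$ and then appeal to the second assertion to convert this into the Dirichlet polynomial, while you prove the second assertion by invoking ``the first statement at $\sigma=\sigma_x$.'' Neither assertion is established unconditionally. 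The standard way out is to prove the second assertion first and self-containedly: take $s=\sigma_x+it$ in Lemma \ref{lemma 1 spectral}, take real parts, and compare with the second display of Lemma \ref{lemma 2 spectral}. For close zeros one has $x^{\beta-\sigma_x}\leq e^{-5}$ and $|\rho-s|^{-3}\leq(\sigma_x-\beta)^{-1}((\sigma_x-\beta)^2+(t-\gamma)^2)^{-1}$ with $\sigma_x-\beta\geq(\sigma_x-1/2)/2\geq 5/\log x$, so after the $1/\log^2x$ prefactor the close-zero part of the explicit-formula error is at most an absolute constant $<1$ times $\sum_{\rho}(\sigma_x-\beta)/((\sigma_x-\beta)^2+(t-\gamma)^2)$ and can be \emph{absorbed into the left-hand side}; this absorption step is the engine of the second assertion and is absent from your sketch. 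Only then can the first assertion, for general $\sigma\geq\sigma_x$, be deduced in the way you describe.

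Two further points. First, your conversion of $\sum_\rho(\sigma_x-\beta)/(\cdots)$ into $\sum_\rho(\sigma_x-1/2)/(\cdots)$ via the symmetry $\rho\mapsto 1-\bar\rho$ does not work as stated: that map sends the denominator $(\sigma_x-\beta)^2+(t-\gamma)^2$ to $(\sigma_x-1+\beta)^2+(t-\gamma)^2$, so the terms do not pair up; the correct route is the pointwise inequality $\sigma_x-1/2\leq 2(\sigma_x-\beta)$ for close zeros (again from the definition of $\sigma_x$) together with a separate crude bound, via $\sum_\rho(1+(t-\gamma)^2)^{-1}\ll\log(|t|+t_j+2)$, for the far zeros. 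Second, your worry about ``close zeros with $\rho\to s$'' is misplaced: for $\sigma\geq\sigma_x$ and $\rho$ close one always has $\sigma-\beta\geq(\sigma_x-1/2)/2\geq 5/\log x$, so $s$ is uniformly separated from such $\rho$; the quadratic vanishing of $(1-x^{v})^2$ at $v=0$ is needed only to justify the contour shift in Lemma \ref{lemma 1 spectral}, not here. The genuinely delicate case is rather the \emph{far} zeros with $|t-\gamma|$ of size $1/\log x$, which must also be played off against the sum $\sum_\rho(\sigma_x-1/2)/((\sigma_x-\beta)^2+(t-\gamma)^2)$ rather than against the Weyl count alone.
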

\begin{proof}
The proof is similar to \cite[Lemma 4.3]{LS} or \cite[Lemma 3.3]{SW},
so we will not go into details here.
\end{proof}
The following theorem is to provide an approximation of $S_j(t)$.
\begin{theorem}\label{theorem 1 spectral}
For $t\neq 0$, and $x\geq 4$, we have
\begin{equation*}
\begin{split}
S_j(t)=&\frac{1}{\pi}\Im\sum_{n\leq x^3}\frac{C_j(n)\Lambda_x(n)}{n^{\sigma_x+it}\log n}+
O\left((\sigma_x-1/2)\left|\sum_{n\leq x^3} \frac{C_j(n)\Lambda_x(n)}{n^{\sigma_x+it}}\right|\right)\\
&+O\big((\sigma_x-1/2)\log(|t|+t_j+1)\big),
\end{split}
\end{equation*}
where $\sigma_x$ is defined by \eqref{sigma_x spectral}.
\end{theorem}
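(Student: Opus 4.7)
The plan is to express $S_j(t)$ as an integral of $\Im(L'/L)$ along the horizontal ray $\sigma\ge 1/2$, split the integral at $\sigma_x$, extract the main term from the tail via Lemma~\ref{lemma 3 spectral}, and absorb the short piece $[1/2,\sigma_x]$ into the stated error. Since $\arg L(\sigma+it,u_j)\to 0$ as $\sigma\to\infty$, continuous variation yields
\[
\pi S_j(t)=\arg L\bigl(\tfrac12+it,u_j\bigr)=-\int_{1/2}^{\infty}\Im\frac{L'}{L}(\sigma+it,u_j)\,\dd\sigma,
\]
which I split at $\sigma=\sigma_x$ and handle in two pieces. Throughout, let $\mathcal{E}:=\bigl|\sum_{n\le x^3}C_j(n)\Lambda_x(n)/n^{\sigma_x+it}\bigr|+\log(|t|+t_j+1)$ stand for the aggregate size appearing on the right of the target bound.

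For the tail $\int_{\sigma_x}^{\infty}$, I substitute the Dirichlet polynomial representation from Lemma~\ref{lemma 3 spectral} and use $\int_{\sigma_x}^{\infty}n^{-\sigma}\,\dd\sigma=1/(n^{\sigma_x}\log n)$ to produce exactly the claimed main term $\tfrac{1}{\pi}\Im\sum_{n\le x^3}C_j(n)\Lambda_x(n)/(n^{\sigma_x+it}\log n)$. The error factor $x^{1/4-\sigma/2}$ from Lemma~\ref{lemma 3 spectral} integrates to $2x^{1/4-\sigma_x/2}/\log x\le 2e^{-5}/\log x$ on $[\sigma_x,\infty)$ (because $\sigma_x\ge 1/2+10/\log x$); and since $1/\log x\le(\sigma_x-1/2)/10$, this error is comfortably $\ll (\sigma_x-1/2)\mathcal{E}$, already of the stated form.

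For the short integral $\int_{1/2}^{\sigma_x}$, I apply Lemma~\ref{lemma 2 spectral} with reference point $s'=\sigma_x+it$:
\[
\Im\frac{L'}{L}(\sigma+it)=\Im\frac{L'}{L}(\sigma_x+it)+\Im\sum_\rho\!\left(\frac{1}{\sigma+it-\rho}-\frac{1}{\sigma_x+it-\rho}\right)+O(1).
\]
The $\sigma$-constant pieces integrate to $(\sigma_x-\tfrac12)\Im(L'/L)(\sigma_x+it)+O(\sigma_x-\tfrac12)$, which is $O((\sigma_x-\tfrac12)\mathcal{E})$ once Lemma~\ref{lemma 3 spectral} (first statement at $\sigma=\sigma_x$) is used to bound $|(L'/L)(\sigma_x+it)|\ll\mathcal{E}$. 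The remaining integrated zero sum is
\[
\sum_\rho\Big\{\bigl[\arg(\sigma_x+it-\rho)-\arg(\tfrac12+it-\rho)\bigr]-(\sigma_x-\tfrac12)\tfrac{\gamma-t}{(\sigma_x-\beta)^2+(t-\gamma)^2}\Big\}.
\]

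The hard part will be showing that this zero sum is $O((\sigma_x-\tfrac12)\mathcal{E})$. I plan to split zeros into \emph{far} ($|t-\gamma|>\sigma_x-\tfrac12$) and \emph{close}. For a far zero, a Taylor expansion of $\sigma\mapsto 1/(\sigma+it-\rho)-1/(\sigma_x+it-\rho)$ around $\sigma=\sigma_x$ shows that its $[\tfrac12,\sigma_x]$-integral has magnitude $O((\sigma_x-\tfrac12)^2/((\sigma_x-\beta)^2+(t-\gamma)^2))$; summing and invoking the second statement of Lemma~\ref{lemma 3 spectral} then gives $(\sigma_x-\tfrac12)\sum_\rho(\sigma_x-\tfrac12)/((\sigma_x-\beta)^2+(t-\gamma)^2)\ll(\sigma_x-\tfrac12)\mathcal{E}$, as required. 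For a close zero the bracketed summand is trivially $O(1)$, and because the definition of $\sigma_x$ forces $\sigma_x-\beta\ge(\sigma_x-\tfrac12)/2$, each close zero contributes $\gg 1/(\sigma_x-\tfrac12)$ to $\sum_\rho(\sigma_x-\tfrac12)/((\sigma_x-\beta)^2+(t-\gamma)^2)$; the second statement of Lemma~\ref{lemma 3 spectral} therefore caps the count of close zeros by $O((\sigma_x-\tfrac12)\mathcal{E})$, and multiplying by the per-zero $O(1)$ bound yields the same total. Combining the tail and short contributions completes the proof.
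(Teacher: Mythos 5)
Your proposal is correct and follows essentially the same route as the paper: the integral representation $\pi S_j(t)=-\int_{1/2}^{\infty}\Im\frac{L'}{L}(\sigma+it,u_j)\,\dd\sigma$ split at $\sigma_x$, Lemma \ref{lemma 3 spectral} for the tail and for the value at $\sigma_x$, Lemma \ref{lemma 2 spectral} for the short piece, and the defining property of $\sigma_x$ (zeros with $|\beta-1/2|>\tfrac12(\sigma_x-1/2)$ are forced to satisfy $|t-\gamma|>x^{3|\beta-1/2|}/\log x$) to control the zero sum. Your far/close split on $|t-\gamma|$ with explicit antiderivatives is only a cosmetic variant of the paper's uniform bound on the $J_3$ integrand via a case split on $|\beta-1/2|$; both reduce to the same estimate $\sum_\rho\frac{\sigma_x-1/2}{(\sigma_x-\beta)^2+(t-\gamma)^2}\ll\mathcal{E}$ from Lemma \ref{lemma 3 spectral}.
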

\begin{proof}
By the definition of $S_j(t)$, we have
\begin{equation*}
\begin{split}
\pi S_j(t)=&-\int_{1/2}^\infty\Im \frac{L'}{L}(\sigma+it,u_j)\mathrm{d}\sigma\\
=&-\int_{\sigma_x}^\infty\Im \frac{L'}{L}(\sigma+it,u_j)\mathrm{d}\sigma
-(\sigma_x-1/2)\Im \frac{L'}{L}(\sigma_x+it,u_j)\\
&+\int_{1/2}^{\sigma_x}\Im\left( \frac{L'}{L}(\sigma_x+it,u_j)-\frac{L'}{L}(\sigma+it,u_j)\right)\mathrm{d}\sigma\\
:=&\, J_1+J_2+J_3.
\end{split}
\end{equation*}
By Lemma \ref{lemma 3 spectral}, we have
\begin{equation*}
\begin{split}
J_1=&-\int_{\sigma_x}^\infty\Im \frac{L'}{L}(\sigma+it,u_j)\mathrm{d}\sigma\\
=&\int_{\sigma_x}^\infty\Im\sum_{n\leq x^3} \frac{C_j(n)\Lambda_x(n)}{n^{\sigma+it}}\mathrm{d}\sigma
+O\left(\left|\sum_{n\leq x^3} \frac{C_j(n)\Lambda_x(n)}{n^{\sigma_x+it}}\right|
\int_{\sigma_x}^\infty x^{1/4-\sigma/2}\mathrm{d}\sigma\right)\\
&+O\left(\log(|t|+t_j+1)\int_{\sigma_x}^\infty x^{1/4-\sigma/2}\mathrm{d}\sigma\right)\\
=&\Im\sum_{n\leq x^3} \frac{C_j(n)\Lambda_x(n)}{n^{\sigma_x+it}\log n}
+O\left(\frac{1}{\log x}\left|\sum_{n\leq x^3} \frac{C_j(n)\Lambda_x(n)}{n^{\sigma_x+it}}\right|\right)
+O\left(\frac{\log(|t|+t_j+1)}{\log x}\right).
\end{split}
\end{equation*}
Taking $\sigma=\sigma_x$ in Lemma \ref{lemma 3 spectral}, we have
\begin{equation*}
\begin{split}
J_2\leq&\,(\sigma_x-1/2)\left|\frac{L'}{L}(\sigma_x+it,u_j)\right|\\
\ll&\,(\sigma_x-1/2)\left|\sum_{n\leq x^3} \frac{C_j(n)\Lambda_x(n)}{n^{\sigma_x+it}}\right|
+(\sigma_x-1/2)\log(|t|+t_j+1).
\end{split}
\end{equation*}
Using Lemma \ref{lemma 2 spectral}, we get
\begin{equation*}
\begin{split}
|J_3|\leq&\sum_\rho\int_{1/2}^{\sigma_x}
 \frac{(\sigma_x-\sigma)|\sigma+\sigma_x-2\beta||t-\gamma|}
{\big((\sigma_x-\beta)^2+(t-\gamma)^2\big)\big((\sigma-\beta)^2+(t-\gamma)^2\big)}\mathrm{d}\sigma
+O(\sigma_x-1/2)\\
\leq&\sum_\rho\frac{\sigma_x-1/2}{(\sigma_x-\beta)^2+(t-\gamma)^2}\int_{1/2}^{\sigma_x}
 \frac{|\sigma+\sigma_x-2\beta||t-\gamma|}
{(\sigma-\beta)^2+(t-\gamma)^2}\mathrm{d}\sigma
+O(\sigma_x-1/2).
\end{split}
\end{equation*}
If $|\beta-1/2|\leq\frac{1}{2}(\sigma_x-1/2)$, then for $1/2\leq\sigma\leq\sigma_x$,
\begin{equation*}
\begin{split}
|\sigma+\sigma_x-2\beta|&=|(\sigma-1/2)+(\sigma_x-1/2)-2(\beta-1/2)|\\
&\leq |\sigma-1/2|+|\sigma_x-1/2|+2|\beta-1/2|\leq 3(\sigma_x-1/2).
\end{split}
\end{equation*}
Thus,
\begin{equation*}
\begin{split}
\int_{1/2}^{\sigma_x}
 \frac{|\sigma+\sigma_x-2\beta||t-\gamma|}
{(\sigma-\beta)^2+(t-\gamma)^2}\mathrm{d}\sigma
&\leq 3(\sigma_x-1/2)\int_{-\infty}^\infty\frac{|t-\gamma|}
{(\sigma-\beta)^2+(t-\gamma)^2}\mathrm{d}\sigma\\
&\leq 10(\sigma_x-1/2).
\end{split}
\end{equation*}
If $|\beta-1/2|>\frac{1}{2}(\sigma_x-1/2)$, then
\bna
|t-\gamma|>\frac{x^{3|\beta-1/2|}}{\log x}>3|\beta-1/2|
\ena
and for $1/2\leq\sigma\leq\sigma_x$,
\begin{equation*}
\begin{split}
|\sigma+\sigma_x-2\beta|\leq |\sigma-1/2|+|\sigma_x-1/2|+2|\beta-1/2|\leq 6|\beta-1/2|.
\end{split}
\end{equation*}
Thus,
\begin{equation*}
\begin{split}
\int_{1/2}^{\sigma_x}
 \frac{|\sigma+\sigma_x-2\beta||t-\gamma|}
{(\sigma-\beta)^2+(t-\gamma)^2}\mathrm{d}\sigma
&<\int_{1/2}^{\sigma_x}\frac{|\sigma+\sigma_x-2\beta|}
{|t-\gamma|}\mathrm{d}\sigma\\
&\leq \int_{1/2}^{\sigma_x}\frac{6|\beta-1/2|}
{3|\beta-1/2|}\mathrm{d}\sigma=2(\sigma_x-1/2).
\end{split}
\end{equation*}
By Lemma \ref{lemma 3 spectral}, we obtain
\begin{equation*}
\begin{split}
|J_3|\leq&10(\sigma_x-1/2)\sum_\rho\frac{\sigma_x-1/2}{(\sigma_x-\beta)^2+(t-\gamma)^2}
+O(\sigma_x-1/2)\\
=&O\left((\sigma_x-1/2)\left|\sum_{n\leq x^3} \frac{C_j(n)\Lambda_x(n)}{n^{\sigma_x+it}}\right|\right)
+O\big((\sigma_x-1/2)\log(|t|+t_j+1)\big).
\end{split}
\end{equation*}
Thus we complete the proof of lemma.
\end{proof}
As a corollary of Theorem \ref{theorem 1 spectral}, we give an analog which was established by
Selberg \cite{Selberg1} and Hejhal--Luo \cite{HL}.
\begin{theorem}
Under the \textrm{GRH}, we have
\bna
S_j(t)\ll\frac{\log(|t|+t_j+1)}{\log\log(|t|+t_j+1)}.
\ena
\end{theorem}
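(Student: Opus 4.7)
Under the GRH, every nontrivial zero $\rho=\beta+i\gamma$ of $L(s,u_j)$ has $\beta=\tfrac12$, so $\max_\rho|\beta-\tfrac12|=0$ and the definition \eqref{sigma_x spectral} collapses to $\sigma_x=\tfrac12+\tfrac{10}{\log x}$. Substituting this into Theorem~\ref{theorem 1 spectral} reduces the task to estimating
$$
S_j(t)\ll \left|\sum_{n\leq x^3}\frac{C_j(n)\Lambda_x(n)}{n^{\sigma_x+it}\log n}\right| + \frac{1}{\log x}\left|\sum_{n\leq x^3}\frac{C_j(n)\Lambda_x(n)}{n^{\sigma_x+it}}\right| + \frac{\log T}{\log x},
$$
where $T:=|t|+t_j+1$, and then to choosing $x$ optimally.

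For the two Dirichlet polynomials, I would invoke the Kim--Sarnak bound \eqref{lambda_j(n) bound}; combined with $\alpha_j(p)\beta_j(p)=1$ and $|\alpha_j(p)|\geq 1$, it yields $|C_j(p^k)|\leq 2p^{7k/64}$ for every $k\geq 1$. Using $\Lambda_x(n)\leq\Lambda(n)$ and splitting into prime powers, the prime number theorem with partial summation gives
$$
\sum_{n\leq x^3}\frac{|C_j(n)|\Lambda_x(n)}{n^{\sigma_x}}\ll \sum_{k\geq 1}\sum_{p\leq x^{3/k}}\frac{\log p}{p^{k(\sigma_x-7/64)}}\ll x^{117/64},
$$
because $\sigma_x-\tfrac{7}{64}\geq\tfrac{25}{64}$ and the $k=1$ contribution dominates. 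Since $\Lambda_x(p^k)/\log p^k\leq 1$, the same argument with the extra factor $1/\log n$ gives the sharper bound $x^{117/64}/\log x$ for the first sum. Feeding both back yields
$$
S_j(t)\ll \frac{x^{117/64}}{\log x}+\frac{\log T}{\log x}.
$$

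Choosing $x=(\log T)^{64/117}$ balances the two pieces at $\log T/\log\log T$, proving the claim. The argument is essentially routine given Theorem~\ref{theorem 1 spectral}; the only point worth flagging is that the partial progress \eqref{lambda_j(n) bound} toward Ramanujan ($\vartheta=7/64$ rather than $0$) inflates the Dirichlet polynomial estimate from $x^{3/2}$ to $x^{117/64}$, shifting the optimal $x$ from $(\log T)^{2/3}$ to $(\log T)^{64/117}$ but leaving the final bound $\log T/\log\log T$ unchanged.
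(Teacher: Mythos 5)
Your proposal is correct and follows essentially the same route as the paper: under GRH one has $\sigma_x=\tfrac12+\tfrac{10}{\log x}$, the Dirichlet polynomials in Theorem~\ref{theorem 1 spectral} are bounded via the Kim--Sarnak estimate by $x^{3(\vartheta+1/2)}/\log x=x^{117/64}/\log x$, and the choice $x=(\log(|t|+t_j+1))^{64/117}$ gives the claim. The only cosmetic difference is that you bound $C_j(p^k)$ directly through $|\alpha_j(p)|\leq p^{7/64}$ rather than through the relations \eqref{C_j prime power}, which is equivalent.
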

\begin{proof}
We have $\sigma_x=\frac{1}{2}+\frac{10}{\log x}$ by mean of the GRH.
By \eqref{C_j prime power} and \eqref{lambda_j(n) bound},
\bea\label{bound 3}
|C_j(p^m)\Lambda_x(p^m)|\ll p^{m\vartheta}\log p.
\eea
Thus,
\bna
\left|\Im\sum_{n\leq x^3}\frac{C_j(n)\Lambda_x(n)}{n^{\sigma_x+it}\log n}\right|
\ll \sum_{p\leq x^3}p^{\vartheta-1/2}\ll \frac{x^{3(\vartheta+1/2)}}{\log x},
\ena
and
\bna
(\sigma_x-1/2)\left|\sum_{n\leq x^3} \frac{C_j(n)\Lambda_x(n)}{n^{\sigma_x+it}}\right|
\ll \frac{1}{\log x}\sum_{p\leq x^3}p^{\vartheta-1/2}\log p\ll \frac{x^{3(\vartheta+1/2)}}{\log x}.
\ena
Then the theorem follows by taking $x=\big(\log(|t|+t_j+1)\big)^{\frac{64}{117}}$ in Theorem \ref{theorem 1 spectral}.
\end{proof}
\section{Proof of Theorem \ref{main-theorem}}
\begin{lemma}\label{lemma 6}
For fixed $t>0$ and large parameter $M$ with $T^\varepsilon\leq M\leq T^{1-\varepsilon}$.
For $T$ sufficiently large and $x=T^{\delta/3}$ with $0<\delta<\frac{3\theta}{8n\ell+3}$, $\ell\in\{0,1,2\}$, we have
\begin{equation*}
\begin{split}
\sideset{}{^\prime}\sum_{j=1}^\infty e^{-\frac{(t_j-T)^2}{M^2}} |\nu_j(1)|^2(\sigma_{x,j}-1/2)^{4n}x^{4 n\ell(\sigma_{x,j}-1/2)}
\ll_{t,n,\delta,\ell}\frac{MT}{(\log T)^{4n}},
\end{split}
\end{equation*}
where $\sigma_{x,j}$ is defined by \eqref{sigma_x spectral} and $\theta$ is as in Lemma \ref{zero-density spectral}.
\end{lemma}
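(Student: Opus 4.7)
The plan is to split the $j$-sum according to where the maximum in the definition $\sigma_{x,j}-\tfrac12 = 2\max_\rho\{|\beta-\tfrac12|,\,5/\log x\}$ is realized. Write $R_j := \sigma_{x,j}-\tfrac12$ and $w_j := e^{-(t_j-T)^2/M^2}|\nu_j(1)|^2$. On the trivial locus where $R_j = 10/\log x$ (no relevant zero has $|\beta-\tfrac12|>5/\log x$), one has $R_j^{4n}\ll(\log x)^{-4n}$ and $x^{4n\ell R_j}=e^{40n\ell}=O_{n,\ell}(1)$, so this contribution is bounded by $(\log x)^{-4n}\sum_j w_j \ll MT/(\log T)^{4n}$ by Lemma \ref{corollary1} with $m=n=1$ together with $\log x\asymp\log T$.

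The substantive case is $R_j = 2\alpha_j$ with $\alpha_j := \max_\rho|\beta-\tfrac12|\geq 5/\log x$, the max ranging over zeros $\rho=\beta+i\gamma$ of $L(s,u_j)$ satisfying $|t-\gamma|\leq x^{3|\beta-1/2|}/\log x$. I would decompose $\alpha_j$ dyadically with a small sliding ratio: let $A_k := (1+\eta)^k\cdot 5/\log x$ for $k=0,1,2,\ldots$, where $\eta>0$ is small and will be fixed below. For $\alpha_j\in[A_k,A_{k+1})$ the defining zero obeys $\beta>\tfrac12+A_k$ and $|t-\gamma|\leq H_k := x^{3A_{k+1}}/\log x$; the inequalities $3/\log T<H_k<T^{\delta_1}$ hold once $\delta$ is suitably small, using $A_k\leq 1/2$ and $x=T^{\delta/3}$. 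Lemma \ref{zero-density spectral}, combined with $\mathcal{H}\asymp MT$ from its definition in Lemma \ref{Kuznetsov trace formula}, then yields the weighted count on this class,
\[
\sum_{j\,:\,\alpha_j\in[A_k,A_{k+1})}w_j\;\leq\;\mathcal{H}\cdot N(\tfrac12+A_k,\,H_k)\;\ll\; MT\cdot H_k\,T^{-\theta A_k}\log T.
\]

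Bounding $R_j^{4n}x^{4n\ell R_j}\leq(2A_{k+1})^{4n}x^{8n\ell A_{k+1}}$ on this class and substituting $x=T^{\delta/3}$, the class-$k$ contribution to the target sum becomes
\[
\ll MT\cdot\frac{\log T}{\log x}\cdot A_k^{4n}\cdot T^{A_k[(1+\eta)(3+8n\ell)\delta/3 \,-\,\theta]}.
\]
Since $\delta<3\theta/(8n\ell+3)$, I can choose $\eta>0$ so small that the bracketed exponent equals some $-c<0$; then $T^{-cA_k}$ decays geometrically in $k$ (because $A_k$ grows geometrically), and the $k$-sum is dominated by its $k=0$ term. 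At $k=0$ one has $A_0=5/\log x$, giving $A_0^{4n}\ll(\log T)^{-4n}$ and $T^{-cA_0}=O_{\delta,n,\ell}(1)$, which produces the claimed $\ll MT/(\log T)^{4n}$ (the factor $\log T/\log x=3/\delta$ is $O_\delta(1)$).

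The main delicate point, and the reason for the precise threshold $3\theta/(8n\ell+3)$, is the balancing of $A_k$-powers of $T$: the detection width $H_k$ contributes $T^{\delta A_{k+1}}$, the weight $x^{4n\ell R_j}$ contributes $T^{8n\ell\delta A_{k+1}/3}$, and only the zero-density saving $T^{-\theta A_k}$ counteracts them. The stated upper bound on $\delta$ is exactly what drives the combined $A_k$-exponent to become negative as $\eta\to 0$; a coarse factor-of-$2$ dyadic decomposition would roughly double the critical denominator and lose the sharp constant, so the sliding-ratio refinement is essential.
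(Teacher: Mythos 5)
Your argument is essentially the paper's: isolate the trivial locus $\sigma_{x,j}-\tfrac12=10/\log x$ (handled by Lemma \ref{corollary1}), stratify the remaining forms by the size of $\max_\rho|\beta-\tfrac12|$ into geometric shells, bound the weighted count on each shell via Lemma \ref{zero-density spectral} together with $\mathcal{H}\ll MT$, and sum a rapidly convergent series whose convergence is exactly the constraint $\delta<3\theta/(8n\ell+3)$; the paper's choice of additive shells $\beta-\tfrac12\in(k/\log x,(k+1)/\log x]$ and its counting of zeros per form via $N(\sigma,H;u_j)$ (rather than classifying each $j$ by where its maximum lands) are cosmetic variants that yield the same threshold. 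One small correction: the zeros defining $\sigma_{x,j}$ satisfy $|\gamma-t|\leq x^{3|\beta-1/2|}/\log x$ rather than $|\gamma|\leq H_k$, so the height in $N(\sigma,H;u_j)$ must be taken as $|t|+H_k$, as the paper does, with the fixed $t$ then absorbed into the $\ll_t$ constant.
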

\begin{proof}
For $\ell\in\{0,1,2\}$, by the definition of $\sigma_{x,j}$, we have
\begin{equation*}
\begin{split}
&(\sigma_{x,j}-1/2)^{4n}x^{4n\ell(\sigma_{x,j}-1/2)}\\
\leq&\left(\frac{10}{\log x}\right)^{4n}x^{40n\ell/\log x}
+2^{4n+1}\sum_{\beta>\frac{1}{2}+\frac{5}{\log x}\atop |t-\gamma|\leq\frac{x^{3|\beta-1/2|}}{\log x}}
(\beta-1/2)^{4n}x^{8n\ell(\beta-1/2)}.
\end{split}
\end{equation*}
On the other hand,
\begin{equation*}
\begin{split}
&\sum_{\beta>\frac{1}{2}+\frac{5}{\log x}\atop |t-\gamma|\leq\frac{x^{3|\beta-1/2|}}{\log x}}
(\beta-1/2)^{4n}x^{8n\ell(\beta-1/2)}\\
\leq&\sum_{k=5}^{\frac{1}{2}\lfloor\log x\rfloor}\left(\frac{k+1}{\log x}\right)^{4n}
x^{8n\ell\frac{k+1}{\log x}}\sum_{\frac{1}{2}+\frac{k}{\log x}<\beta\leq\frac{1}{2}+\frac{k+1}{\log x}
\atop |t-\gamma|\leq\frac{x^{3|\beta-1/2|}}{\log x}}1\\
\leq&\frac{1}{(\log x)^{4n}}\sum_{k=5}^{\frac{1}{2}\lfloor\log x\rfloor}\left(k+1\right)^{4n}
e^{8n\ell(k+1)}N\left(\frac{1}{2}+\frac{k}{\log x}, |t|+\frac{e^{3(k+1)}}{\log x};u_j\right).
\end{split}
\end{equation*}
By Lemma \ref{zero-density spectral} and the bound $\mathcal{H}\ll MT$, we have
\begin{equation*}
\begin{split}
&\sideset{}{^\prime}\sum_{j=1}^\infty e^{-\frac{(t_j-T)^2}{M^2}} |\nu_j(1)|^2
\sum_{\beta>\frac{1}{2}+\frac{5}{\log x}\atop |t-\gamma|\leq\frac{x^{3|\beta-1/2|}}{\log x}}
(\beta-1/2)^{4n}x^{8n\ell(\beta-1/2)}\\
\leq&\frac{1}{(\log x)^{4n}}\sum_{k=5}^{\frac{1}{2}\lfloor\log x\rfloor}\left(k+1\right)^{4n}
e^{8n\ell(k+1)}\sideset{}{^\prime}\sum_{j=1}^\infty e^{-\frac{(t_j-T)^2}{M^2}} |\nu_j(1)|^2
N\left(\frac{1}{2}+\frac{k}{\log x}, |t|+\frac{e^{3(k+1)}}{\log x};u_j\right)\\
\ll&\frac{1}{(\log x)^{4n}}\sum_{k=5}^{\infty}\left(k+1\right)^{4n}
e^{8n\ell(k+1)}\mathcal{H}\left(|t|+\frac{e^{3(k+1)}}{\log x}\right) T^{-\theta\frac{k}{\log x}}\log T\\
\ll&_{t,n,\delta,\ell}\,\frac{MT\log T}{(\log x)^{4n+1}}\sum_{k=5}^{\infty}\left(k+1\right)^{4n}
e^{(8n\ell+3-\frac{3\theta}{\delta})k}\\
\ll&_{t,n,\delta,\ell}\,\frac{MT}{(\log T)^{4n}}
\end{split}
\end{equation*}
provided that $0<\delta<\frac{3\theta}{8n\ell+3}$.
In addition, by Lemma \ref{corollary1},
\begin{equation*}
\begin{split}
\sideset{}{^\prime}\sum_{j=1}^\infty e^{-\frac{(t_j-T)^2}{M^2}} |\nu_j(1)|^2
\left(\frac{10}{\log x}\right)^{4n}x^{40n\ell/\log x}
\ll_{n,\ell}\frac{TM}{(\log x)^{4n}}\ll_{n,\ell,\delta}\frac{TM}{(\log T)^{4n}}.
\end{split}
\end{equation*}
Thus we complete the proof of lemma.
\end{proof}

For a positive parameter $x$ (to be chosen later), let
\bna
M_j(t):=\frac{1}{\pi}\Im\sum_{p\leq x^3}\frac{C_j(p)}{p^{1/2+it}},\quad
\text{and}\quad
R_j(t):=S_j(t)-M_j(t),
\ena
recall here $C_j(p)=\alpha_j(p)+\beta_j(p)=\lambda_j(p)$.
\begin{lemma}\label{lemma 5 spectral} We have
\begin{equation*}
\begin{split}
R_j(t)=&O\left(\left|\Im\sum_{p\leq x^3}
\frac{\lambda_j(p)(\Lambda_x(p)-\Lambda(p))}{p^{1/2+it}\log p}\right|\right)
+O\left(\left|\Im\sum_{p\leq x^{3/2}}\frac{\lambda_j(p^2)\Lambda_x(p^2)}{p^{1+2it}\log p}\right|\right)\\
&+O\left((\sigma_x-1/2)x^{\sigma_x-1/2}\int_{1/2}^\infty
x^{1/2-\sigma}\left|\sum_{p\leq x^3}
\frac{\lambda_j(p)\Lambda_x(p)\log(xp)}{p^{\sigma+it}}\right|\mathrm{d}\sigma\right)\\
&+O\left((\sigma_x-1/2)x^{2\sigma_x-1}\int_{1/2}^\infty x^{1-2\sigma}\left|\sum_{p\leq x^{3/2}}
\frac{\lambda_j(p^2)\Lambda_x(p^2)\log(xp)}{p^{2\sigma+2it}}\right|\mathrm{d}\sigma\right)\\
&+O\big((\sigma_x-1/2)\log(|t|+t_j+1)\big)+O\big((\sigma_x-1/2)\log x\big).
\end{split}
\end{equation*}
\end{lemma}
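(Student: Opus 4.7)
The plan is to apply Theorem~\ref{theorem 1 spectral} and split the resulting sum according to the prime-power decomposition $n=p^m$, isolating $\pi M_j(t)$ from the $m=1$ stratum at $\sigma=1/2$. Using \eqref{C_j prime power} and the Kim--Sarnak bound \eqref{lambda_j(n) bound}, the contribution from $m\ge 3$ is bounded by $\sum_{m\ge 3}\sum_p p^{-m(1/2-\vartheta)}=O(1)$, which is absorbed into the $O((\sigma_x-1/2)\log x)$ catch-all since $(\sigma_x-1/2)\log x\ge 10$ by the definition of $\sigma_x$; the $j$-independent $-1$ contribution from $C_j(p^2)=\lambda_j(p^2)-1$ is handled similarly.

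For the prime stratum I would shift the exponent from $\sigma_x$ down to $1/2$ via the elementary identity
\begin{equation*}
\frac{1}{p^{\sigma_x+it}}=\frac{1}{p^{1/2+it}}-\int_{1/2}^{\sigma_x}\frac{\log p}{p^{\sigma+it}}\,\mathrm{d}\sigma.
\end{equation*}
Since $\Lambda(p)=\log p$, the $\sigma=1/2$ boundary piece splits as $\sum_p\lambda_j(p)/p^{1/2+it}+\sum_p\lambda_j(p)(\Lambda_x(p)-\Lambda(p))/(p^{1/2+it}\log p)$, the first giving $\pi M_j(t)$ upon taking imaginary parts and the second producing the first listed error term of the lemma. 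The analogous manoeuvre at exponent $2\sigma$ applied to the $\lambda_j(p^2)$ piece of the $m=2$ stratum isolates the second listed error term. What remains are the integrals $\int_{1/2}^{\sigma_x}|F(\sigma+it)|\,\mathrm{d}\sigma$ together with the Theorem~\ref{theorem 1 spectral} error $(\sigma_x-1/2)|F(\sigma_x+it)|$, where $F(\sigma+it)$ denotes the relevant prime (or prime-square) Dirichlet polynomial.

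The key technical device is the averaging identity
\begin{equation*}
\log(xp)\int_{1/2}^\infty(xp)^{1/2-\sigma}\,\mathrm{d}\sigma=1\qquad (p\ge 2),
\end{equation*}
which, after multiplication by $p^{-(\sigma_x+it)}$ and the change of variables $\tau=\sigma+\sigma_x-1/2$ (respectively $2\tau=\sigma+2\sigma_x-1/2$ for the prime-square case), yields the representations
\begin{equation*}
\frac{1}{p^{\sigma_x+it}}=\log(xp)\int_{\sigma_x}^\infty\frac{x^{\sigma_x-\tau}}{p^{\tau+it}}\,\mathrm{d}\tau,\qquad
\frac{1}{p^{2\sigma_x+2it}}=2\log(xp)\int_{\sigma_x}^\infty\frac{x^{2(\sigma_x-\tau)}}{p^{2\tau+2it}}\,\mathrm{d}\tau.
\end{equation*}
Summing over $p$, pulling absolute values inside, and extending $\int_{\sigma_x}^\infty$ to $\int_{1/2}^\infty$ converts the Theorem~\ref{theorem 1 spectral} errors into exactly the third and fourth listed error terms. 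The residual integrals $\int_{1/2}^{\sigma_x}|F(\sigma+it)|\,\mathrm{d}\sigma$ are handled by the same identity applied at each $\sigma$, followed by Fubini and the elementary bound $\int_{1/2}^{\min(\tau,\sigma_x)}x^{\sigma-\tau}\,\mathrm{d}\sigma\leq(\sigma_x-1/2)x^{\sigma_x-1/2}x^{1/2-\tau}$, valid for $\tau\ge 1/2$ using $(\sigma_x-1/2)\log x\ge 10$. Finally, the fifth error term $(\sigma_x-1/2)\log(|t|+t_j+1)$ passes through unchanged from Theorem~\ref{theorem 1 spectral}.

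The main obstacle will be arranging these integral transformations so that the $\log(xp)$ weight appears uniformly in both the prime and prime-square cases, and sweeping the $j$-independent and higher-prime-power remainders cleanly into the $O((\sigma_x-1/2)\log x)$ catch-all; everything else is bookkeeping.
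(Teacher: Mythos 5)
Your proposal is correct and follows essentially the same route as the paper: apply Theorem~\ref{theorem 1 spectral}, strip off the $m\ge 3$ prime powers and the $j$-independent part of $C_j(p^2)$ as $O(1)=O((\sigma_x-1/2)\log x)$, shift the exponent from $\sigma_x$ to $1/2$ by the integral identity to extract $M_j(t)$ and the first two error terms, and convert the remaining Dirichlet polynomials at $\sigma_x$ into the weighted integrals $\int_{1/2}^\infty x^{1/2-\sigma}|\cdots|\,\mathrm{d}\sigma$ via the $\log(xp)$ averaging identity. The only (immaterial) difference is that you handle the residual $\int_{1/2}^{\sigma_x}$ integral by Fubini rather than by bounding the integrand uniformly in $a\in[1/2,\sigma_x]$ as the paper does.
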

\begin{proof}
By Theorem \ref{theorem 1 spectral},
\begin{equation*}
\begin{split}
R_j(t)=&\frac{1}{\pi}\Im\sum_{p\leq x^3}
\frac{C_j(p)(\Lambda_x(p)p^{1/2-\sigma_x}-\Lambda(p))}{p^{1/2+it}\log p}
+\frac{1}{\pi}\Im\sum_{m=2}^\infty\sum_{p^m\leq x^3}\frac{C_j(p^m)\Lambda_x(p^m)}{p^{m(\sigma_x+it)}\log p^m}\\
&+O\left((\sigma_x-1/2)\left|\sum_{m=1}^\infty\sum_{p^m\leq x^3} \frac{C_j(p^m)
\Lambda_x(p^m)}{p^{m(\sigma_x+it)}}\right|\right)
+O\big((\sigma_x-1/2)\log(|t|+t_j+1)\big).
\end{split}
\end{equation*}
Using the bound \eqref{bound 3}, we deduce that
\begin{equation*}
\begin{split}
\sum_{m=3}^\infty\sum_{p^m\leq x^3}\frac{C_j(p^m)\Lambda_x(p^m)}{p^{m(\sigma_x+it)}}=O(1),\quad\text{and}\quad
\sum_{m=3}^\infty\sum_{p^m\leq x^3}\frac{C_j(p^m)\Lambda_x(p^m)}{p^{m(\sigma_x+it)}\log p}=O(1).
\end{split}
\end{equation*}
Thus,
\begin{equation*}
\begin{split}
R_j(t)=&\frac{1}{\pi}\Im\sum_{p\leq x^3}
\frac{\lambda_j(p)(\Lambda_x(p)-\Lambda(p))}{p^{1/2+it}\log p}
-\frac{1}{\pi}\Im\sum_{p\leq x^3}
\frac{\lambda_j(p)\Lambda_x(p)(1-p^{1/2-\sigma_x})}{p^{1/2+it}\log p}\\
&+\frac{1}{\pi}\Im\sum_{p\leq x^{3/2}}\frac{C_j(p^2)\Lambda_x(p^2)}{p^{1+2it}\log p^2}
+\frac{1}{\pi}\Im\sum_{p\leq x^{3/2}}\frac{C_j(p^2)\Lambda_x(p^2)(p^{1-2\sigma_x}-1)}{p^{1+2it}\log p^2}\\
&+O\left((\sigma_x-1/2)\left|\sum_{p\leq x^3} \frac{\lambda_j(p)
\Lambda_x(p)}{p^{\sigma_x+it}}\right|\right)
+O\left((\sigma_x-1/2)\left|\sum_{p\leq x^{3/2}}\frac{C_j(p^2)\Lambda_x(p^2)}{p^{2(\sigma_x+it)}}\right|\right)\\
&+O\big((\sigma_x-1/2)\log(|t|+t_j+1)\big)+O(1)\\
:=&\sum_{i=1}^8S_i.
\end{split}
\end{equation*}
Recall that $C_j(p^2)=\lambda_j(p^2)-1$ and the definition of $\Lambda_x$, we have
\begin{equation*}
\begin{split}
S_3=&\frac{1}{\pi}\Im\sum_{p\leq x^{3/2}}\frac{\lambda_j(p^2)\Lambda_x(p^2)}{p^{1+2it}\log p^2}
-\frac{1}{\pi}\Im\sum_{p\leq x^{3/2}}\frac{\Lambda_x(p^2)}{p^{1+2it}\log p^2}\\
=&O\left(\left|\Im\sum_{p\leq x^{3/2}}\frac{\lambda_j(p^2)\Lambda_x(p^2)}{p^{1+2it}\log p}\right|\right)
+O\left(\left|\sum_{p\leq x^{1/2}}\frac{\Lambda(p^2)}{p^{1+2it}\log p}\right|\right)\\
&+O\left(\left|\sum_{x^{1/2}\leq p\leq x}\frac{\Lambda_x(p^2)}{p^{1+2it}\log p}\right|\right)
+O\left(\left|\sum_{x\leq p\leq x^{3/2}}\frac{\Lambda_x(p^2)}{p^{1+2it}\log p}\right|\right)\\
=&O\left(\left|\Im\sum_{p\leq x^{3/2}}\frac{\lambda_j(p^2)\Lambda_x(p^2)}{p^{1+2it}\log p}\right|\right)+O(1),
\end{split}
\end{equation*}
Similarly, we have
\begin{equation*}
\begin{split}
S_4
=&O\left(\left|\Im\sum_{p\leq x^{3/2}}\frac{\lambda_j(p^2)\Lambda_x(p^2)(p^{1-2\sigma_x}-1)}{p^{1+2it}\log p}\right|\right)+O(1),
\end{split}
\end{equation*}
and
\begin{equation*}
\begin{split}
S_6
=&O\left((\sigma_x-1/2)\left|\sum_{p\leq x^{3/2}}\frac{\lambda_j(p^2)\Lambda_x(p^2)}{p^{2(\sigma_x+it)}}\right|\right)+
O\big((\sigma_x-1/2)\log x\big).
\end{split}
\end{equation*}
Note that for $1/2\leq a\leq \sigma_x$,
\begin{equation*}
\begin{split}
\left|\sum_{p\leq x^3} \frac{\lambda_j(p)\Lambda_x(p)}{p^{a+it}}\right|
&=x^{a-1/2}\left|\int_a^\infty x^{1/2-\sigma}\sum_{p\leq x^3}
\frac{\lambda_j(p)\Lambda_x(p)\log(xp)}{p^{\sigma+it}}\mathrm{d}\sigma\right|\\
&\leq x^{\sigma_x-1/2}\int_{1/2}^\infty x^{1/2-\sigma}\left|\sum_{p\leq x^3}
\frac{\lambda_j(p)\Lambda_x(p)\log(xp)}{p^{\sigma+it}}\right|\mathrm{d}\sigma.
\end{split}
\end{equation*}
Thus,
\begin{equation*}
\begin{split}
S_2&\ll \left|\sum_{p\leq x^3}
\frac{\lambda_j(p)\Lambda_x(p)(1-p^{1/2-\sigma_x})}{p^{1/2+it}\log p}\right|
=\left|\int_{1/2}^{\sigma_x} \sum_{p\leq x^3} \frac{\lambda_j(p)\Lambda_x(p)}{p^{a+it}}\mathrm{d}a\right|\\
&\leq (\sigma_x-1/2)x^{\sigma_x-1/2}\int_{1/2}^\infty x^{1/2-\sigma}\left|\sum_{p\leq x^3}
\frac{\lambda_j(p)\Lambda_x(p)\log(xp)}{p^{\sigma+it}}\right|\mathrm{d}\sigma,
\end{split}
\end{equation*}
and
\begin{equation*}
\begin{split}
S_5\ll(\sigma_x-1/2)\left|\sum_{p\leq x^3} \frac{\lambda_j(p)
\Lambda_x(p)}{p^{\sigma_x+it}}\right|\leq (\sigma_x-1/2)x^{\sigma_x-1/2}\int_{1/2}^\infty
x^{1/2-\sigma}\left|\sum_{p\leq x^3}
\frac{\lambda_j(p)\Lambda_x(p)\log(xp)}{p^{\sigma+it}}\right|\mathrm{d}\sigma.
\end{split}
\end{equation*}
Similarly, for $1/2\leq a\leq \sigma_x$,
\begin{equation*}
\begin{split}
\left|\sum_{p\leq x^{3/2}} \frac{\lambda_j(p^2)\Lambda_x(p^2)}{p^{2a+2it}}\right|
&=2x^{2a-1}\left|\int_a^\infty x^{1-2\sigma}\sum_{p\leq x^{3/2}}
\frac{\lambda_j(p^2)\Lambda_x(p^2)\log(xp)}{p^{2\sigma+2it}}\mathrm{d}\sigma\right|\\
&\leq 2x^{2\sigma_x-1}\int_{1/2}^\infty x^{1-2\sigma}\left|\sum_{p\leq x^{3/2}}
\frac{\lambda_j(p^2)\Lambda_x(p^2)\log(xp)}{p^{2\sigma+2it}}\right|\mathrm{d}\sigma.
\end{split}
\end{equation*}
Thus, we have
\begin{equation*}
\begin{split}
&\left|\sum_{p\leq x^{3/2}}\frac{\lambda_j(p^2)\Lambda_x(p^2)(p^{1-2\sigma_x}-1)}{p^{1+2it}\log p}\right|
=2\left|\int_{1/2}^{\sigma_x}\sum_{p\leq x^{3/2}}\frac{\lambda_j(p^2)\Lambda_x(p^2)}{p^{2a+2it}}\mathrm{d}a\right|\\
&\leq 4(\sigma_x-1/2)x^{2\sigma_x-1}\int_{1/2}^\infty x^{1-2\sigma}\left|\sum_{p\leq x^{3/2}}
\frac{\lambda_j(p^2)\Lambda_x(p^2)\log(xp)}{p^{2\sigma+2it}}\right|\mathrm{d}\sigma,
\end{split}
\end{equation*}
and
\begin{equation*}
\begin{split}
(\sigma_x-1/2)\left|\sum_{p\leq x^{3/2}}\frac{\lambda_j(p^2)\Lambda_x(p^2)}{p^{2(\sigma_x+it)}}\right|
\leq 2(\sigma_x-1/2)x^{2\sigma_x-1}\int_{1/2}^\infty x^{1-2\sigma}\left|\sum_{p\leq x^{3/2}}
\frac{\lambda_j(p^2)\Lambda_x(p^2)\log(xp)}{p^{2\sigma+2it}}\right|\mathrm{d}\sigma.
\end{split}
\end{equation*}
Thus we complete the proof of lemma.
\end{proof}
\begin{lemma}\label{lemma 6 spectral}
For fixed $t>0$ and large parameter $M$ with $T^\varepsilon\leq M\leq T^{1-\varepsilon}$.
For $T$ sufficiently large and $x=T^{\delta/3}$ with sufficiently small $\delta>0$, we have
\begin{equation}\label{M_j(t) moment spectral}
\begin{split}
\sideset{}{^\prime}\sum_{j=1}^\infty e^{-\frac{(t_j-T)^2}{M^2}} |\nu_j(1)|^2M_j(t)^n
=\frac{MT}{4\pi^{3/2}}C_n(\log\log T)^{\frac{n}{2}}
+O_n\big(MT(\log\log T)^{\frac{n}{2}-1}\big),
\end{split}
\end{equation}
and
\begin{equation}\label{R_j(t) moment spectral}
\begin{split}
\sideset{}{^\prime}\sum_{j=1}^\infty e^{-\frac{(t_j-T)^2}{M^2}}|\nu_j(1)|^2 |R_j(t)|^{2n}=O(MT).
\end{split}
\end{equation}
Here $C_n$ is defined by
\bna
C_n=\begin{cases}
\frac{n!}{(n/2)!(2\pi)^{n}}, \,&\textit{if }\, n\,\, \text{is even},\\
0, \,& \textit{if }\, n \,\,\text{is odd}.
\end{cases}
\ena
\end{lemma}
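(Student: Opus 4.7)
I would prove both assertions of Lemma~\ref{lemma 6 spectral} in parallel, relying on Lemma~\ref{corollary1} to collapse the spectral sums to diagonal contributions and on Mertens' estimate $\sum_{p\leq y}1/p = \log\log y + O(1)$ for the arithmetic factors. For \eqref{M_j(t) moment spectral}, I would write $M_j(t) = \frac{1}{2\pi i}\sum_{p\leq x^3}\lambda_j(p)(p^{-1/2-it}-p^{-1/2+it})$ and expand
$$
M_j(t)^n = \frac{1}{(2\pi i)^n}\sum_{\epsilon\in\{\pm 1\}^n}(-1)^{|\{i:\epsilon_i=+1\}|}\sum_{p_1,\ldots,p_n\leq x^3}\prod_{i=1}^n\lambda_j(p_i)\,p_i^{-1/2+\epsilon_i it}.
$$
After collecting repeated primes and applying the Hecke relation \eqref{the Hecke relation 1} iteratively, each monomial $\prod_i\lambda_j(p_i)$ decomposes into a finite integer combination of $\lambda_j(m)$, so Lemma~\ref{corollary1} retains only those configurations producing $m=1$, modulo an error of $O(MT^\varepsilon)$ per term.

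The dominant contribution comes from perfect pair-ups with $|\{i:\epsilon_i=+1\}|=n/2$ in which each of $n/2$ distinct primes $q$ appears exactly twice, once with $\epsilon=+1$ and once with $\epsilon=-1$. In such configurations the oscillatory factors $q^{+it}\cdot q^{-it}=1$ cancel pairwise leaving the product $\prod 1/q$, while the Hecke expansion $\lambda_j(q)^2=\lambda_j(q^2)+1$ contributes the constant $1$ to the $m=1$ diagonal of Lemma~\ref{corollary1}, all other tails being annihilated by the Kronecker $\delta$. Counting gives $\binom{n}{n/2}(n/2)!=n!/(n/2)!$ such sign-and-pairing configurations; the signs $(-1)^{n/2}$ and $i^{-n}=(-1)^{n/2}$ combine to $+1$ for even $n$. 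Together with $\bigl(\sum_{p\leq x^3}1/p\bigr)^{n/2}=(\log\log T)^{n/2}+O((\log\log T)^{n/2-1})$ and the Kuznetsov factor $MT/(4\pi^{3/2})$, this yields exactly $\tfrac{MT}{4\pi^{3/2}}C_n(\log\log T)^{n/2}$. For odd $n$ no perfect pairing exists, so the leading term vanishes and only the error $O(MT(\log\log T)^{(n-1)/2})$ survives. The main technical burden is to show that all non-perfect configurations (with some prime appearing three or more times, or with primes failing to cancel) each lose at least one factor of $\log\log T$ relative to the main term.

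For \eqref{R_j(t) moment spectral}, I would substitute the six-term decomposition of $R_j(t)$ from Lemma~\ref{lemma 5 spectral} into $R_j(t)^{2n}$, open the power, and estimate each of the resulting multilinear spectral sums separately. The two oscillatory prime sums carrying no $(\sigma_x-1/2)$ prefactor are handled by raising to the $2n$-th power and invoking Lemma~\ref{corollary1}: the diagonal is controlled because $\Lambda_x(p)-\Lambda(p)$ is supported on $[x,x^3]$ where $\sum 1/p = O(1)$, and because $\sum 1/p^2$ converges in the $p^2$-sum. The four remaining terms carry $(\sigma_x-1/2)$ or $x^{\ell(\sigma_x-1/2)}$ factors; for these I would apply Cauchy--Schwarz to split the spectral average so that Lemma~\ref{lemma 6} controls $\sum_j e^{-(t_j-T)^2/M^2}|\nu_j(1)|^2(\sigma_x-1/2)^{4n}x^{4n\ell(\sigma_x-1/2)}$ by $MT/(\log T)^{4n}$, while Lemma~\ref{corollary1} handles the $4n$-th moment of the arithmetic sum. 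The main obstacle is balancing parameters so that the zero-density saving $T^{-\theta(\sigma-1/2)}$ of Lemma~\ref{zero-density spectral} dominates the combinatorial explosion from the $2n$-th power together with $x^{O(1)}$ losses from the length of the prime sums; this is achieved by fixing $\delta$ in $x=T^{\delta/3}$ sufficiently small in terms of $n$ and $\theta$, delivering the claimed $O(MT)$ bound.
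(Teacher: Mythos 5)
Your proposal is correct and follows essentially the same route as the paper: expand $M_j(t)^n$ into prime-power configurations, use Lemma \ref{corollary1} to isolate the diagonal, identify the perfectly paired $\epsilon=\pm1$ configurations as the main term via Mertens (with the non-perfect configurations losing a factor of $\log\log T$ exactly as in the paper's Cases I--III), and for $R_j(t)$ combine the decomposition of Lemma \ref{lemma 5 spectral} with Cauchy--Schwarz, Lemma \ref{lemma 6}, and the $4n$-th moment bound from Lemma \ref{corollary1}. The only cosmetic difference is that you extract the diagonal in Case IV via $\lambda_j(q)^2=\lambda_j(q^2)+1$ rather than writing $\prod\lambda_j(p_i)^2=\lambda_j(p_1\cdots p_m)^2$ as the paper does, which is equivalent.
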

\begin{proof}
Recall that
\bna
M_j(t)=\frac{1}{\pi}\Im\sum_{p\leq x^3}\frac{\lambda_j(p)}{p^{1/2+it}}=
\frac{-i}{2\pi}\left(\sum_{p\leq x^3}\frac{\lambda_j(p)}{p^{1/2+it}}
-\sum_{p\leq x^3}\frac{\lambda_j(p)}{p^{1/2-it}}\right).
\ena
Set $x=T^{\delta/3}$ for a suitably small $\delta>0$. Thus we have that the general term of
\bna
M_j(t)^n=\frac{(-i)^n}{(2\pi)^n}\left(\sum_{p\leq T^{\delta}}\frac{\lambda_j(p)}{p^{1/2+it}}
-\sum_{p\leq T^{\delta}}\frac{\lambda_j(p)}{p^{1/2-it}}\right)^n
\ena
has the form
\bea\label{general term spectral}
\frac{\lambda_j(p_1)^{m(p_1)+n(p_1)}}{p_1^{m(p_1)(1/2+it)}(-1)^{n(p_1)}p_1^{n(p_1)(1/2-it)}}
\times\cdots\times
\frac{\lambda_j(p_r)^{m(p_r)+n(p_r)}}{p_r^{m(p_r)(1/2+it)}(-1)^{n(p_r)}p_r^{n(p_r)(1/2-it)}},
\eea
where $p_1<p_2<\cdots<p_r\leq T^\delta$, $m(p_j)+n(p_j)\geq 1$, and $\sum_{j=1}^r(m(p_j)+n(p_j))=n$.

Now we discuss the contribution from the general term \eqref{general term spectral} in the following cases.

Case\,(\uppercase\expandafter{\romannumeral 1}):
\emph{In the general term \eqref{general term spectral}, $m(p_{j_0})\not\equiv n(p_{j_0})(\bmod\,2)$ for some $j_0$.}
With the Hecke relation \eqref{the Hecke relation 1}, the numerator of this general term \eqref{general term spectral}
can be written as a sum whose terms are all of the form $\lambda_j(m)\lambda_j(n)$ where $m\neq n$.
Since for these terms $\delta_{m,n}=0$, we see from Lemma \ref{corollary1} that the contribution from the general term
\eqref{general term spectral} to the moment \eqref{M_j(t) moment spectral} is
\begin{equation*}
\begin{split}
&\ll_n\sum_{p_1<p_2<\cdots<p_r\leq T^\delta}p_1^{-\frac{1}{2}(m(p_1)+n(p_1))}\cdots
p_r^{-\frac{1}{2}(m(p_r)+n(p_r))}MT^{\varepsilon}\\
&\ll_n\bigg(\sum_{p\leq T^\delta}p^{-\frac{1}{2}}\bigg)^r MT^{\varepsilon}
\ll_nM T^{\frac{\delta n}{2}(1+\varepsilon)}\ll_n MT^{\frac{1}{2}+\varepsilon},
\end{split}
\end{equation*}
upon taking $0<\delta<\frac{1}{n}$.

Case\,(\uppercase\expandafter{\romannumeral 2}):
\emph{In the general term \eqref{general term spectral}, $m(p_{j})\equiv n(p_{j})(\bmod\,2)$ for all $j$,
and $m(p_{j_0})+n(p_{j_0})\geq4$ for some $j_0$.}
In this case, $n$ is even and $n>2r$, so $r<\frac{n}{2}$.
By using Hecke relation \eqref{the Hecke relation 1},
one sees that it is possible to have resulting terms $\lambda_j(m)\lambda_j(n)$ with $m=n$.
Using Lemma \ref{corollary1}, the contribution to the moment \eqref{M_j(t) moment spectral} such terms is bounded by
the diagonal term of size $\asymp MT$. The contribution from the off-diagonal terms is clearly negligible compared
to $MT$, given that $0<\delta<\frac{1}{n}$.
Using the classical result
\bea\label{classic bound 2}
\sum_{p\leq x}\frac{1}{p}=\log\log x+O(1),
\eea
we have the contribution from this general term to the moment \eqref{M_j(t) moment spectral} is at most
\begin{equation*}
\begin{split}
&\ll_nMT\sum_{p_1<p_2<\cdots<p_r\leq T^\delta}p_1^{-\frac{1}{2}(m(p_1)+n(p_1))}\cdots
p_r^{-\frac{1}{2}(m(p_r)+n(p_r))}\\
&\ll_nMT\bigg(\sum_{p\leq T^\delta}p^{-1}\bigg)^r
\ll_nMT (\log\log T)^{\frac{n}{2}-1}.
\end{split}
\end{equation*}

It remains to discuss the following: In the general term \eqref{general term spectral}, $m(p_{j})\equiv n(p_{j})(\bmod\,2)$
and $m(p_j)+n(p_j)=2$ for all $j$. Clearly $n$ must be even, say $n=2m$ and so $r=m$.

Case\,(\uppercase\expandafter{\romannumeral 3}):
\emph{$m(p_{j_0})=2$ or $n(p_{j_0})=2$ for some $j_0$.}
We deduce that the contribution from these terms to \eqref{M_j(t) moment spectral} is
at most $O\big(MT(\log\log T)^{\frac{n}{2}-1}\big)$ by using \eqref{classic bound 2} and the convergence of $\sum_{p}p^{-1-it}$ for $t\neq 0$.

Case\,(\uppercase\expandafter{\romannumeral 4}):
\emph{$m(p_{j})=n(p_{j})=1$ for all $j$.}
The contribution from these terms to \eqref{M_j(t) moment spectral} is
\begin{equation*}
\begin{split}
\sideset{}{^\prime}\sum_{j=1}^\infty e^{-\frac{(t_j-T)^2}{M^2}} |\nu_j(1)|^2\binom{2m}{m}(m!)(m!)\frac{(-1)^m(-i)^{2m}}{(2\pi)^{2m}}
\sum_{p_1<p_2<\cdots<p_m\leq T^\delta}\frac{\lambda_j(p_1)^2\cdots\lambda_j(p_m)^2}{p_1\cdots p_m}.
\end{split}
\end{equation*}
By the Hecke relations, we have $\lambda_j(p_1)^2\cdots\lambda_j(p_m)^2=\lambda_j(p_1\cdots p_m)\lambda_j(p_1\cdots p_m)$.
Applying Lemma \ref{corollary1} and \eqref{classic bound 2}, we have the above equals
\begin{equation*}
\begin{split}
&\frac{(2m)!}{(2\pi)^{2m}}\sum_{p_1<p_2<\cdots<p_m\leq T^\delta}\frac{1}{p_1\cdots p_m}
\cdot\left(\frac{MT}{4\pi^{3/2}}+O_\varepsilon\left(MT^\varepsilon\right)\right)\\
=&\frac{(2m)!}{m!(2\pi)^{2m}}\sum_{p_1,p_2,\ldots,p_m\leq T^\delta\atop p_j \,\text{distinct}}\frac{1}{p_1\cdots p_m}
\cdot\left(\frac{MT}{4\pi^{3/2}}+O_\varepsilon\left(MT^\varepsilon\right)\right)\\
=&\frac{MT}{4\pi^{3/2}}\frac{(2m)!}{m!(2\pi)^{2m}}\sum_{p_1,p_2,\ldots,p_m\leq T^\delta\atop p_j \,\text{distinct}}\frac{1}{p_1\cdots p_m}
+O_\varepsilon\bigg(MT^\varepsilon\sum_{p_1,p_2,\ldots,p_m\leq T^\delta\atop p_j\,\text{distinct}}\frac{1}{p_1\cdots p_m}\bigg)\\
=&\frac{MT}{4\pi^{3/2}}\frac{(2m)!}{m!(2\pi)^{2m}}(\log\log T+O(1))^m
+O_\varepsilon\big(MT^\varepsilon(\log\log T)^m\big)\\
=&\frac{MT}{4\pi^{3/2}}\frac{(2m)!}{m!(2\pi)^{2m}}(\log\log T)^m
+O_m\big(MT(\log\log T)^{m-1}\big).
\end{split}
\end{equation*}
Now the asymptotic formula \eqref{M_j(t) moment spectral} follows from
Cases (\uppercase\expandafter{\romannumeral 1})--(\uppercase\expandafter{\romannumeral 4}).

Next we are ready to prove \eqref{R_j(t) moment spectral}.
Recall that $\sigma_x=\sigma_{x,j}$ depending on $u_j$.
By Lemma \ref{lemma 5 spectral},
\begin{equation}\label{R}
\begin{split}
&\sideset{}{^\prime}\sum_{j=1}^\infty e^{-\frac{(t_j-T)^2}{M^2}}|\nu_j(1)|^2 |R_j(t)|^{2n}\\
\ll&\sideset{}{^\prime}\sum_{j=1}^\infty e^{-\frac{(t_j-T)^2}{M^2}}|\nu_j(1)|^2\left|\Im\sum_{p\leq x^3}
\frac{\lambda_j(p)(\Lambda_x(p)-\Lambda(p))}{p^{1/2+it}\log p}\right|^{2n}
+\sideset{}{^\prime}\sum_{j=1}^\infty e^{-\frac{(t_j-T)^2}{M^2}}|\nu_j(1)|^2
\left|\Im\sum_{p\leq x^{3/2}}\frac{\lambda_j(p^2)\Lambda_x(p^2)}{p^{1+2it}\log p}\right|^{2n}\\
&+\sideset{}{^\prime}\sum_{j=1}^\infty e^{-\frac{(t_j-T)^2}{M^2}}|\nu_j(1)|^2
(\sigma_x-1/2)^{2n}x^{2n(\sigma_x-1/2)}\left(\int_{1/2}^\infty
x^{1/2-\sigma}\left|\sum_{p\leq x^3}
\frac{\lambda_j(p)\Lambda_x(p)\log(xp)}{p^{\sigma+it}}\right|\mathrm{d}\sigma\right)^{2n}\\
&+\sideset{}{^\prime}\sum_{j=1}^\infty e^{-\frac{(t_j-T)^2}{M^2}}|\nu_j(1)|^2
(\sigma_x-1/2)^{2n}x^{2n(2\sigma_x-1)}\left(\int_{1/2}^\infty
x^{1-2\sigma}\left|\sum_{p\leq x^{3/2}}
\frac{\lambda_j(p^2)\Lambda_x(p^2)\log(xp)}{p^{2\sigma+2it}}\right|\mathrm{d}\sigma\right)^{2n}\\
&+\sideset{}{^\prime}\sum_{j=1}^\infty e^{-\frac{(t_j-T)^2}{M^2}}|\nu_j(1)|^2(\sigma_x-1/2)^{2n}\big(\log(|t|+t_j+1)\big)^{2n}\\
&+\sideset{}{^\prime}\sum_{j=1}^\infty e^{-\frac{(t_j-T)^2}{M^2}}|\nu_j(1)|^2(\sigma_x-1/2)^{2n}\big(\log x\big)^{2n}.
\end{split}
\end{equation}
For the first two terms in \eqref{R}, we have
\begin{equation*}
\begin{split}
&\left|\Im\sum_{p\leq x^3}
\frac{\lambda_j(p)(\Lambda_x(p)-\Lambda(p))}{p^{1/2+it}\log p}\right|^{2n}
\ll_n\left(\sum_{p\leq x^3}
\frac{\lambda_j(p)(\Lambda_x(p)-\Lambda(p))}{p^{1/2+it}\log p}-\sum_{p\leq x^3}
\frac{\lambda_j(p)(\Lambda_x(p)-\Lambda(p))}{p^{1/2-it}\log p}\right)^{2n},
\end{split}
\end{equation*}
and
\begin{equation}\label{aim3}
\begin{split}
&\left|\Im\sum_{p\leq x^{3/2}}\frac{\lambda_j(p^2)\Lambda_x(p^2)}{p^{1+2it}\log p}\right|^{2n}
\ll_n\left(\sum_{p\leq x^{3/2}}\frac{\lambda_j(p^2)\Lambda_x(p^2)}{p^{1+2it}\log p}-
\sum_{p\leq x^{3/2}}\frac{\lambda_j(p^2)\Lambda_x(p^2)}{p^{1-2it}\log p}\right)^{2n}.
\end{split}
\end{equation}
Here we use the fact $|\Lambda_x(p)-\Lambda(p)|=O\left(\frac{\log^3p}{\log^2x}\right)$ and obtain
these two terms is of $O(TM)$ by the same and easier argument than \eqref{M_j(t) moment spectral}.
The last two terms in \eqref{R} is of $O(TM)$ by Cauchy--Schwarz inequality and Lemma \ref{lemma 6}.
For the third term in \eqref{R},
it follows from Cauchy--Schwarz inequality that
\begin{equation}\label{after Cauchy}
\begin{split}
&\sideset{}{^\prime}\sum_{j=1}^\infty e^{-\frac{(t_j-T)^2}{M^2}}|\nu_j(1)|^2
(\sigma_x-1/2)^{2n}x^{2n(\sigma_x-1/2)}\left(\int_{1/2}^\infty
x^{1/2-\sigma}\left|\sum_{p\leq x^3}
\frac{\lambda_j(p)\Lambda_x(p)\log(xp)}{p^{\sigma+it}}\right|\mathrm{d}\sigma\right)^{2n}\\
&\leq \left(\sideset{}{^\prime}\sum_{j=1}^\infty e^{-\frac{(t_j-T)^2}{M^2}}|\nu_j(1)|^2(\sigma_x-1/2)^{4n}x^{4n(\sigma_x-1/2)}\right)^{1/2}\\
&\times\left(\sideset{}{^\prime}\sum_{j=1}^\infty e^{-\frac{(t_j-T)^2}{M^2}}|\nu_j(1)|^2\left(\int_{1/2}^\infty
x^{1/2-\sigma}\left|\sum_{p\leq x^3}
\frac{\lambda_j(p)\Lambda_x(p)\log(xp)}{p^{\sigma+it}}\right|\mathrm{d}\sigma\right)^{4n}\right)^{1/2}.
\end{split}
\end{equation}
By H\"{o}lder's inequality with the exponents $4n/(4n-1)$ and $4n$,
\begin{equation*}
\begin{split}
&\left(\int_{1/2}^\infty
x^{1/2-\sigma}\left|\sum_{p\leq x^3}
\frac{\lambda_j(p)\Lambda_x(p)\log(xp)}{p^{\sigma+it}}\right|\mathrm{d}\sigma\right)^{4n}\\
\leq&\left(\int_{1/2}^\infty
x^{1/2-\sigma}\mathrm{d}\sigma\right)^{4n-1}\left(\int_{1/2}^\infty
x^{1/2-\sigma}\left|\sum_{p\leq x^3}
\frac{\lambda_j(p)\Lambda_x(p)\log(xp)}{p^{\sigma+it}}\right|^{4n}\mathrm{d}\sigma\right)\\
=&\frac{1}{(\log x)^{4n-1}}\int_{1/2}^\infty
x^{1/2-\sigma}\left|\sum_{p\leq x^3}
\frac{\lambda_j(p)\Lambda_x(p)\log(xp)}{p^{\sigma+it}}\right|^{4n}\mathrm{d}\sigma.
\end{split}
\end{equation*}
Note that
\begin{equation*}
\begin{split}
&\left|\sum_{p\leq x^3}
\frac{\lambda_j(p)\Lambda_x(p)\log(xp)}{p^{\sigma+it}}\right|^{4n}\\
&\ll_n \left|\Re\sum_{p\leq x^3}
\frac{\lambda_j(p)\Lambda_x(p)\log(xp)}{p^{\sigma+it}}\right|^{4n}
+\left|\Im\sum_{p\leq x^3}
\frac{\lambda_j(p)\Lambda_x(p)\log(xp)}{p^{\sigma+it}}\right|^{4n}\\
&\ll_n \left(\sum_{p\leq x^3}
\frac{\lambda_j(p)\Lambda_x(p)\log(xp)(p^{1/2-\sigma})}{p^{1/2+it}}+\sum_{p\leq x^3}
\frac{\lambda_j(p)\Lambda_x(p)\log(xp)(p^{1/2-\sigma})}{p^{1/2-it}}\right)^{4n}\\
&\quad+\left(\sum_{p\leq x^3}
\frac{\lambda_j(p)\Lambda_x(p)\log(xp)(p^{1/2-\sigma})}{p^{1/2+it}}-\sum_{p\leq x^3}
\frac{\lambda_j(p)\Lambda_x(p)\log(xp)(p^{1/2-\sigma})}{p^{1/2-it}}\right)^{4n}.
\end{split}
\end{equation*}
By using the same argument as in proving \eqref{M_j(t) moment spectral}, we get
\begin{equation}\label{aim1}
\begin{split}
\sideset{}{^\prime}\sum_{j=1}^\infty e^{-\frac{(t_j-T)^2}{M^2}}|\nu_j(1)|^2\left|\sum_{p\leq x^3}
\frac{\lambda_j(p)\Lambda_x(p)\log(xp)}{p^{\sigma+it}}\right|^{4n}
&\ll MT\left(\sum_{p\leq x^3}\frac{|\Lambda_x(p)\log(xp)p^{1/2-\sigma}|^2}{p}\right)^{2n}\\
&\ll MT\left((\log x)^2\sum_{p\leq x^3}\frac{(\log p)^2}{p}\right)^{2n}
\ll MT(\log x)^{8n}.
\end{split}
\end{equation}
Thus,
\begin{equation*}
\begin{split}
&\sideset{}{^\prime}\sum_{j=1}^\infty e^{-\frac{(t_j-T)^2}{M^2}}|\nu_j(1)|^2\left(\int_{1/2}^\infty
x^{1/2-\sigma}\left|\sum_{p\leq x^3}
\frac{\lambda_j(p)\Lambda_x(p)\log(xp)}{p^{\sigma+it}}\right|\mathrm{d}\sigma\right)^{4n}\\
\ll &\frac{1}{(\log x)^{4n-1}}\int_{1/2}^\infty
x^{1/2-\sigma}MT(\log x)^{8n}\mathrm{d}\sigma\ll MT(\log x)^{4n}.
\end{split}
\end{equation*}
Inserting this bound to \eqref{after Cauchy} and using Lemma \ref{lemma 6}, we have the third term in \eqref{R}
is of $O(MT)$.
Using the same or even simpler argument as the one dealing with the third term, we get the fourth term in \eqref{R}
in fact is of $o(MT)$. Thus we complete the proof of this lemma.
\end{proof}
\noindent\textbf{Proof of Theorem \ref{main-theorem}.}
By the binomial theorem, we have
\begin{equation*}
\begin{split}
S_j(t)^n=M_j(t)^n+O_n\left(\sum_{\ell=1}^\infty|M_j(t)|^{n-\ell}|R_j(t)|^{\ell}\right).
\end{split}
\end{equation*}
For $1\leq\ell\leq n$, we apply the generalized H\"{o}lder's inequality and
Lemma \ref{lemma 6 spectral}, and get
\begin{equation*}
\begin{split}
&\sideset{}{^\prime}\sum_{j=1}^\infty e^{-\frac{(t_j-T)^2}{M^2}}|\nu_j(1)|^2|M_j(t)|^{n-\ell}|R_j(t)|^{\ell}\\
\ll &\left(\sideset{}{^\prime}\sum_{j=1}^\infty e^{-\frac{(t_j-T)^2}{M^2}}|\nu_j(1)|^2\right)^{\frac{1}{2}}
\left(\sideset{}{^\prime}\sum_{j=1}^\infty e^{-\frac{(t_j-T)^2}{M^2}}|\nu_j(1)|^2 |M_j(t)|^{2n}\right)^{\frac{n-\ell}{2n}}\\
&\times\left(\sideset{}{^\prime}\sum_{j=1}^\infty e^{-\frac{(t_j-T)^2}{M^2}}|\nu_j(1)|^2 |R_j(t)|^{2n}\right)^{\frac{\ell}{2n}}\\
\ll&_{t,n}TM(\log\log T)^{(n-\ell)/2}\ll_{t,n}TM(\log\log T)^{(n-1)/2}.
\end{split}
\end{equation*}
For the second argument of theorem,
note that the $n$-th moment of $\mu_{TM}$ is
\bna
\int_{\mathbb{R}}\xi^n\mathrm{d}\mu_{TM}(\xi)=\left(\sideset{}{^\prime}\sum_{|t_j-T|\leq M}|\nu_j(1)|^2
\left(\frac{S_j(t)}{\sqrt{\log\log T}}\right)^n\right)
\bigg/\left(\sideset{}{^\prime}\sum_{|t_j-T|\leq M}|\nu_j(1)|^2\right).
\ena
Using Theorem \ref{main-theorem} and Lemma \ref{corollary1}, we obtain that for all $n\geq 0$,
\bna
\lim_{T\rightarrow \infty}\int_{\mathbb{R}}\xi^n\mathrm{d}\mu_{TM}(\xi)=C_n=
\sqrt{\pi}\int_{\mathbb{R}}\xi^n \exp(-\pi^2\xi^2)\mathrm{d}\xi,
\ena
where we used the theory of moments in probability theory (see, for example, \cite[Lemma 30.2]{Billingsley}).

\section{Proof of Lemma \ref{corollary1}}\label{Prove Lemma}
The strategy of this proof is similar to \cite[Lemma 1]{HL}.
By \eqref{lambda_j(n) bound} and \eqref{classic bounds},
\begin{equation*}
\begin{split}
\sideset{}{^\prime}\sum_{j} e^{-\frac{(t_j-T)^2}{M^2}}|\nu_j(1)|^2\lambda_j(m)\lambda_j(n)
=\sideset{}{^\prime}\sum_{j}& h(t_j)|\nu_j(1)|^2\lambda_j(m)\lambda_j(n)\\
&+O\left(e^{-\frac{T^2}{M^2}}\tau(m)\tau(n)(mn)^\vartheta M^2\right),
\end{split}
\end{equation*}
where $h(t)$ is defined by \eqref{definition h(t)}.
Applying the Kuznetsov trace formula to the right-hand side of above and \eqref{bounds},
we have
\bna
\int_{-\infty}^\infty h(t)\tanh(\pi t)t\, \mathrm{d}t=2\sqrt{\pi}TM+O(1),
\ena
and
\bna
\int_{-\infty}^\infty \frac{\tau_{it}(m)\tau_{it}(n)}{|\zeta(1+2it)|^2}h(t)\mathrm{d}t=O\big(\tau(m)\tau(n)M(\log T)^3\big).
\ena
So it suffices to show that
\bea\label{aim 1}
\mathfrak{J}:=\sum_{c>0}
\frac{S(m,n;c)}{c}\int_{-\infty}^\infty J_{2it}\left(\frac{4\pi\sqrt{mn}}{c}\right)\frac{h(t)t}{\cosh(\pi t)}\,\mathrm{d}t
=O(1/\sqrt{T}),
\eea
and
\bea\label{aim 2}
\mathfrak{K}:=\sum_{c>0}
\frac{S(-m,n;c)}{c}\int_{-\infty}^\infty K_{2it}\left(\frac{4\pi\sqrt{mn}}{c}\right)\sinh(\pi t)h(t)t\mathrm{d}t=O(MT^\varepsilon).
\eea
Let $L$ be any integer greater than $1/\varepsilon$, we shift the $t$-integral in \eqref{aim 1} to $\Im(t)=-L$ with simple poles $t=i(\frac{1}{2}-\ell)$ for integers $\ell$, and obtain
that each $t$-integral is
\bea\label{t-integral}
\sum_{\ell=1}^Lc_{\ell}J_{2\ell-1}\left(\frac{4\pi\sqrt{mn}}{c}\right)
h\big(i\big(1/2-\ell\big)\big)
+\int_{-\infty}^\infty J_{2L+2it}\left(\frac{4\pi\sqrt{mn}}{c}\right)
\frac{h(t-iL)(t-iL)}{\cosh(\pi (t-iL))}\,\mathrm{d}t,
\eea
here $c_{\ell}=i(-1)^{\ell}(1-2\ell)$.
Combining with \eqref{J-Bessel} and the bounds
\bna
h(i(1/2-\ell))
\ll e^{-\frac{T^2}{M^2}}e^{\frac{L^2}{M^2}},\quad
h(t-iL)\ll h(t),\quad
|\cosh(\pi (t-iL))|=\cosh(\pi t)\gg e^{\pi |t|},
\ena
we get that \eqref{t-integral} is dominated by
\bna
O_L\left(MT\log T\left(\frac{\sqrt{mn}}{cT}\right)^{2L}
+\frac{1}{c}e^{-\frac{T^2}{2M^2}}\right).
\ena
Using Weil's bound, $\sqrt{mn}\leq T^{1-\varepsilon}$ and $(m,n,c)\leq T$,
we obtain $\mathfrak{J}=O(1/\sqrt{T})$.

Next we denote $Y=\frac{4\pi\sqrt{mn}}{c}$ and split $\mathfrak{K}$ in \eqref{aim 2} into two parts denoted by
$\mathfrak{K}=\mathfrak{K}^++\mathfrak{K}^-,$
which depends on the magnitude of $c$\,:
$c\geq \sqrt{mn}$ ($Y\leq 4\pi$) and $c<\sqrt{mn}$ ($Y> 4\pi$).
\subsection{Case $\mathfrak{K}^+$}
Recall that
\begin{equation*}
\begin{split}
\mathfrak{K}^+=\sum_{c\geq \sqrt{mn}}\frac{S(-m,n;c)}{c}
\int_{-\infty}^{+\infty}h(t)t\sinh(\pi t)K_{2it}\left(Y\right)\mathrm{d}t.
\end{split}
\end{equation*}
By the identity $\sin(2iz)=2i\sinh (z)\cosh (z)$ and \eqref{K-Bessel}, we can obtain
\bna
\int_{-\infty}^{+\infty}h(t)t\sinh(\pi t)K_{2it}\left(Y\right)\mathrm{d}t
=\frac{\pi i}{2}\int_{-\infty}^\infty \frac{I_{2it}(Y)}{\cosh(\pi t)}h(t)t\mathrm{d}t.
\ena
Since $Y\leq 4\pi$,
in this case the treatment is exactly the same as $\mathfrak{J}$ by using \eqref{I-Bessel},
we can get the contribution of $\mathfrak{K}^+$ satisfies the bound \eqref{aim 2}.
\subsection{Case $\mathfrak{K}^-$}
By splitting the $t$-integral of $\mathfrak{K}^-$ into $\int_0^{T-M\log T}$,
$\int_{T-M\log T}^{T+M\log T}$ and $\int_{T+M\log T}^\infty$,
we denote
\bna
t\text{-integral}=2\int_0^\infty h(t)t\sinh(\pi t)K_{2it}\left(Y\right)\mathrm{d}t:=\mathfrak{K}_1^-(Y)+\mathfrak{K}_2^-(Y)+\mathfrak{K}_3^-(Y).
\ena
With the help of the bound \eqref{K-Bessel bound},
we obtain
\bna
\mathfrak{K}_1^-(Y)+\mathfrak{K}_3^-(Y)\ll T^{2/3-\log T}.
\ena
Thus the contribution of $\mathfrak{K}_i^-(Y)$ ($i=1,3$) to $\mathfrak{K}$ is negligible as $T\rightarrow \infty$.

Recall $t\in [T-M\log T, T+M\log T]$ in $\mathfrak{K}_2^-(Y)$ and $\sqrt{mn}\leq T^{1-\varepsilon}$,
so that $Y=o(t)$ in $\mathfrak{K}_2^-(Y)$.
Note that the asymptotic formula (see \cite[pg. 142]{MOS})
\bna
K_{2it}(Y)=\frac{\sqrt{2\pi}e^{-\pi t}}{(4t^2-Y^2)^{\frac{1}{4}}}\big\{\sin (\phi_{Y}(t))+O(t^{-1})\big\},
\ena
where
\bna
\phi_{Y}(t)=\frac{\pi}{4}+2t\arccosh\left(\frac{2t}{Y}\right)-\sqrt{4t^2-Y^2}.
\ena
As Liu \cite{Liu} points out, the error term $O\left(Y^{-1}\right)$ in \cite[pg. 142]{MOS} can be replaced here by $O(t^{-1})$,
by the power series expansion for $K_{2it}(x)$ through that of $I_{\pm2it}(x)$.
For $t\in [T-M\log T, T+M\log T]$, we have
\bna
\sinh(\pi t)e^{-\pi t}=\frac{1}{2}+O(e^{-\pi T}),
\ena
and
\bna
\frac{1}{(4t^2-Y^2)^{\frac{1}{4}}}=\frac{1}{\sqrt{2t}}+O(Y^2t^{-5/2}).
\ena
We only need to deal with
\bna
\int_{T-M\log T}^{T+M\log T} e^{-\left(\frac{t-T}{M}\right)^2}\sin (\phi_Y(t))t^{1/2}\mathrm{d}t
=\int_{T-M\log T}^{T+M\log T} e^{-\left(\frac{t-T}{M}\right)^2}\frac{e^{i\phi_Y(t)}-e^{-i\phi_Y(t)}}{2i}t^{1/2}\mathrm{d}t,
\ena
because obviously other terms can get some smaller contribution to $\mathfrak{K}$.
Denote
\bna
f(t)=e^{-\left(\frac{t-T}{M}\right)^2}t^{1/2},\quad \quad g(t)=\pm\phi_Y(t),
\ena
then
\bna
g'(t)=\pm2\log\left(\frac{2t+\sqrt{4t^2+Y^2}}{Y}\right)\asymp \log T,
\ena
and
\bna
g''(t)=\pm4(4t^2-Y^2)^{-1/2}\asymp T^{-1}.
\ena
By Fa\`{a} di Bruno's formula for high derivatives of composite functions (see e.g. \cite{Johnson})
\begin{equation*}
\begin{split}
\frac{\mathrm{d}^n}{\mathrm{d}t^n}\left(e^{-\left(\frac{t-T}{M}\right)^2}\right)
=e^{-\left(\frac{t-T}{M}\right)^2}\sum_{m_1+2m_2=n}\frac{(-1)^{m_1+m_2}}{m_1!m_2!}
\left(\frac{2(t-T)}{M}\right)^{m_1}\frac{n!}{M^n}\ll M^{-n}(\log T)^n,
\end{split}
\end{equation*}
so that
\bna
f^{(n)}(t)=\sum_{k=0}^n\binom{n}{k}(t^{1/2})^{(n-k)}
\frac{\mathrm{d}^k}{\mathrm{d}t^k}\left(e^{-\left(\frac{t-T}{M}\right)^2}\right)\ll T^{1/2}M^{-n}(\log T)^n.
\ena
Repeated integration by parts gives
\begin{equation*}
\begin{split}
I(Y,T)&:=\int_{T-M\log T}^{T+M\log T}f(t)e^{ig(t)}\mathrm{d}t
=i\int_{T-M\log T}^{T+M\log T}e^{ig(t)}\left(\frac{f'(t)}{g'(t)}-\frac{f(t)g''(t)}{(g'(t))^2}\right)\mathrm{d}t
+O\big(T^{1/2-\log T}\big)\\
&=i\int_{T-M\log T}^{T+M\log T}e^{ig(t)}\frac{f'(t)}{g'(t)}\mathrm{d}t
+O\big(MT^{-1/2}(\log T)^{-1}\big)\\
&=\int_{T-M\log T}^{T+M\log T}e^{ig(t)}\left(\frac{f''(t)}{(g'(t))^2}-\frac{2f'(t)g''(t)}{(g'(t))^3}\right)\mathrm{d}t
+O\big(MT^{-1/2}(\log T)^{-1}\big)\\
&=\int_{T-M\log T}^{T+M\log T}e^{ig(t)}\frac{f''(t)}{(g'(t))^2}\mathrm{d}t
+O\big(MT^{-1/2}(\log T)^{-1}\big)\\
&\cdots\\
&=c_n\int_{T-M\log T}^{T+M\log T}e^{ig(t)}\frac{f^{(n)}(t)}{(g'(t))^n}\mathrm{d}t
+O_n\big(MT^{-1/2}(\log T)^{-1}\big),
\end{split}
\end{equation*}
where $|c_n|=1$. By the estimates for $f^{(n)}(t)$ and $g'(t)$, and the assumption $M>T^\varepsilon$,
we take sufficiently large $n$ to obtain
\begin{equation*}
\begin{split}
I(Y,T)\ll MT^{-1/2}(\log T)^{-1}.
\end{split}
\end{equation*}
Thus the contribution to $\mathfrak{K}$ through $\mathfrak{K}_2^-$ is
\begin{equation*}
\begin{split}
&\ll MT^{-1/2}(\log T)^{-1}\sum_{c\leq\sqrt{mn}}\frac{(-m,n,c)^{1/2}c^{1/2}\tau(c)}{c}
\ll MT^{-1/2}(\log T)^{-1}\sum_{c\leq\sqrt{mn}}\frac{(n,c)^{1/2}\tau(c)}{c^{1/2}}\\
&\ll MT^{-1/2}(\log T)^{-1}\sum_{d\leq\sqrt{mn}\atop d|n}\tau(d)\sum_{c\leq\sqrt{mn}/d}\frac{\tau(c)}{c^{1/2}}\ll MT^\varepsilon.
\end{split}
\end{equation*}
Thus the proof of Lemma \ref{corollary1} is complete.

\bigskip
\noindent{\bf Acknowledgements}
H. Wang would like to thank the Alfr\'{e}d R\'{e}nyi Institute of Mathematics for providing a great working environment
and the China Scholarship Council for its financial support.

\bibliographystyle{amsplain}

\begin{thebibliography}{10}
\bibitem{Backlund}
R. J. Backlund,
{\it \"{U}ber die Nullstellen der Riemannschen Zetafunktion}. (German)
Acta Math. \textbf{41} (1916), no. 1, 345--375.

\bibitem{Billingsley}
P. Billingsley,
{\it Probability and measure}. (English summary)
Third edition. Wiley Series in Probability and Mathematical Statistics. A Wiley-Interscience Publication.
John Wiley \& Sons, Inc., New York, 1995. xiv+593 pp.

\bibitem{CLW}
G. Chen, Y. K. Lau and Y. Wang,
{\it On Selberg's limit theorem for L-functions over a family of $GL(n)$ Hecke-Maass cusp forms}.
Ramanujan J. \textbf{59} (2022), no. 4, 1171--1195.

\bibitem{CI}
J. B. Conrey and H. Iwaniec, {\it The cubic moment of central values of automorphic L-functions}.
Ann. of Math. (2) \textbf{151} (2000), no. 3, 1175--1216.

\bibitem{Cramer}
H. Cram\'{e}r,
{\it \"{U}ber die Nulistellen der Zetafunktion}. (German)
Math. Z. \textbf{2} (1918), no. 3-4, 237--241.

\bibitem{Davenport}
H. Davenport,
{\it Multiplicative number theory}.
Second edition. Revised by Hugh L. Montgomery. Graduate Texts in Mathematics, 74.
Springer-Verlag, New York-Berlin, 1980. xiii+177 pp.


\bibitem{GRS}
A. Ghosh, A. Reznikov and P. Sarnak, {\it Nodal domains of Maass forms I}.
Geom. Funct. Anal. \textbf{23} (2013), no. 5, 1515--1568.

\bibitem{GR}
I. Gradshteyn and I. Ryzhik,
\textit{Table of integrals, series, and products.}
Translated from the Russian. Translation edited and with a preface by Alan Jeffrey and Daniel Zwillinger.
With one CD-ROM (Windows, Macintosh and UNIX). Seventh edition. Elsevier/Academic Press, Amsterdam, 2007. xlviii+1171 pp.

\bibitem{Hejhal}
D. Hejhal, {\it The Selberg trace formula for PSL(2,$\mathbb{R}$)}.
Vol. 2. Lecture Notes in Mathematics, \textbf{1001}. Springer-Verlag, Berlin, 1983. viii+806 pp.

\bibitem{HL}
D. Hejhal and W. Luo, {\it On a spectral analog of Selberg's result on $S(T)$}.
Internat. Math. Res. Notices 1997, no. 3, 135--151.

\bibitem{HL2}
J. Hoffstein and P. Lockhart,
{\it Coefficients of Maass forms and the Siegel zero}.
With an appendix by Dorian Goldfeld, Hoffstein and Daniel Lieman.
Ann. of Math. (2) \textbf{140 }(1994), no. 1, 161--181.


\bibitem{Iwaniec}
H. Iwaniec, {\it Small eigenvalues of Laplacian for $\Gamma_0(N)$}. Acta Arith. \textbf{56} (1990), no. 1, 65--82.


\bibitem{IK}
H. Iwaniec and E. Kowalski, {\it Analytic number theory.} American Mathematical Society Colloquium Publications, 53. American Mathematical Society, Providence, RI, 2004. xii+615 pp.

\bibitem{Johnson}
W. P. Johnson, {\it The curious history of Fa\`{a} di Bruno's formula}. Amer. Math. Monthly \textbf{109} (2002), no. 3, 217--234.

\bibitem{KS}
 H. Kim and P. Sarnak, \textit{Appendix 2 in Functoriality for the exterior square of $GL_4$ and the symmetric fourth of
$GL_2$}, J. Amer. Math. Soc. \textbf{16} (1) (2003), no. 1, 139--183.

\bibitem{Kuznetsov}
N. V. Kuznetsov, {\it The Petersson conjecture for cusp forms of weight zero and the Linnik conjecture.
Sums of Kloosterman sums}. (Russian) Mat. Sb. (N.S.) \textbf{111}(153) (1980), no. 3, 334--383, 479.

\bibitem{Li}
X. Li, {\it A weighted Weyl law for the modular surface}. Int. J. Number Theory \textbf{7} (2011), no. 1, 241--248.

\bibitem{Littlewood}
J. E. Littlewood,
{\it On the zeros of the Riemann zeta-function}.
Proc. London Math. Soc. \textbf{22}(3), 295--318.

\bibitem{Liu}
S. Liu, \textit{Nonvanishing of central L-values of Maass forms}. Adv. Math. \textbf{332} (2018), 403--437.

\bibitem{LL}
S. Liu and S. Liu, {\it A $GL_3$ analog of Selberg's result on $S(t)$}. Ramanujan J. \textbf{56} (2021), no. 1, 163--181.

\bibitem{LS}
S. Liu and J. Shim, {\it Moments of $S(t,f)$ associated with holomorphic Hecke cusp forms}.
Taiwanese J. Math. \textbf{26} (2022), no. 3, 463--482.

\bibitem{LS24}
S. Liu and J. Streipel, {\it The twisted second moment of L-functions associated to Hecke--Maass forms}.
Int. J. Number Theory \textbf{20} (2024), no. 3, 849--866.

\bibitem{MOS}
W. Magnus, F. Oberhettinger and R. P. Soni, {\it Formulas and theorems for the special functions of mathematical physics}.
Third enlarged edition. Die Grundlehren der mathematischen Wissenschaften, Band 52. Springer-Verlag New York, Inc.,
New York, 1966. viii+508 pp.

\bibitem{O}
F. Olver, \textit{Asymptotics and special functions}. Reprint of the 1974 original
[Academic Press, New York; MR0435697]. AKP Classics. A K Peters, Ltd., Wellesley, MA, 1997.
xviii+572 pp.

\bibitem{SW}
Q. Sun and H. Wang, {\it On a level analog of Selberg's result on $S(t)$}. arXiv:2407.14867

\bibitem{Selberg}
A. Selberg, {\it On the remainder in the formula for $N(T)$, the number of zeros of $\zeta(s)$ in the strip $0<t<T$}.
Avh. Norske Vid.-Akad. Oslo I 1944 (1944), no. 1, 27 pp.

\bibitem{Selberg1}
A. Selberg, {\it Contributions to the theory of the Riemann zeta-function}.
Arch. Math. Naturvid. \textbf{48} (1946), no. 5, 89--155.

\bibitem{Selberg2}
A. Selberg, {\it Contributions to the theory of Dirichlet's L-functions}.
Skr. Norske Vid.-Akad. Oslo I 1946 (1946), no. 3, 62 pp.


\bibitem{Titchmarsh}
E. C. Titchmarsh, {\it The Theory of the Riemann Zeta-Function}. Oxford, at the Clarendon Press, 1951. vi+346 pp.

\bibitem{Titchmarsh1}
E. C. Titchmarsh, {\it On the Remainder in the Formula for $N(T)$,
the Number of Zeros of zeta(s) in the Strip $0<t<T$}. Proc. London Math. Soc. (2) \textbf{27} (1928), no. 6, 449--458.

\bibitem{Watson}
G. N. Watson, {\it A Treatise on the Theory of Bessel Functions}.
Cambridge University Press, Cambridge; The Macmillan Company, New York, 1944. vi+804 pp.

\bibitem{Weil}
A. Weil, {\it On some exponential sums}. Proc. Nat. Acad. Sci. U.S.A. \textbf{34} (1948), 204--207.

\end{thebibliography}

\end{document}